\documentclass[10pt,reqno]{amsart}
\usepackage{a4wide}
\usepackage{fancyhdr}
\usepackage{amsfonts,amssymb,amsmath,amsthm,latexsym,indentfirst}
\usepackage{epsfig}
\usepackage{enumerate}
\usepackage{latexsym}
\usepackage[pagewise]{lineno}
\usepackage{mathtools}
\usepackage{amstext}
\mathtoolsset{showonlyrefs}
\usepackage{graphicx}
\usepackage{hyphenat}

\usepackage{textcomp}
\usepackage{graphicx}
\usepackage{color}

\numberwithin{equation}{section}

\newtheorem{theorem}{Theorem}[section]
\newtheorem{definition}[theorem]{Definition}
\newtheorem{lemma}[theorem]{Lemma}
\newtheorem{proposition}[theorem]{Proposition}
\newtheorem{remark}[theorem]{Remark}

\newcommand{\C}[1][]{\ensuremath{{\mathbb{C}^{#1}} }}
\newcommand{\R}[1][]{\ensuremath{{\mathbb{R}^{#1}} }}

\newcommand{\dis}{\displaystyle}

\newcommand{\re}{\mathbb{R}}

\begin{document}

\title[Global solutions to INLS]{On global well-posedness, scattering and other properties for infinity energy solutions to inhomogeneous NLS Equation}

\author[ Mykael Cardoso, Roger P. de Moura and Gleison N. Santos]{Mykael Cardoso, Roger P. de Moura and Gleison N. Santos}

\address{Universidade Federal do Piau\'i, Campus Universit\'ario Ministro Petrônio Portella, Ininga, 64049-550, Teresina, Piau\'i
, Brazil}
\email{mourapr@ufpi.edu.br}

\address{Universidade Federal do Piau\'i, Campus Universit\'ario Ministro Petrônio Portella, Ininga, 64049-550, Teresina, Piau\'i
	, Brazil}
\email{mykael@ufpi.edu.br}

\address{Universidade Federal do Piau\'i, Campus Universit\'ario Ministro Petrônio Portella, Ininga, 64049-550, Teresina, Piau\'i
, Brazil}
\email{gleison@ufpi.edu.br}

\keywords{Infinity energy solutions; Inhomogeneous NLS equation; Self-similar solutions}




%






\maketitle
\begin{center}
   \begin{minipage}[t]{0.4\linewidth}
        \centering
     {\small\textit{  In celebration of 50 years of the Mathematics Department at UFPI.}}
    \end{minipage}
    \end{center}
\begin{abstract}
In this work, we consider the inhomogeneous nonlinear Schrödinger (INLS) equation in $\mathbb{R}^n$
\begin{align}
i\partial_t u + \Delta u + \gamma |x|^{-b}|u|^{\alpha} u = 0,
\end{align}
where $\gamma=\pm 1$, and $\alpha$ and $b$ are positive numbers. Our main focus is to estabilish the global well-posedness of the INLS equation in Lorentz spaces for $0<b<2$ and $\alpha<\frac{4-2b}{N-2}$. To achieve this, we use Strichartz estimates in Lorentz spaces $L^{r,q}(\R^n)$ combined with a fixed point argument. Working on Lorentz space setting instead the classical $L^p$ is motivated by the fact that the potential $|x|^{-b}$ does not belong the usual $L^p$-space. As a consequence of the ideas developed here on the global solution study we obtain some other properties for INLS, such as, existence of self-similar solutions, scattering, wave operators and assymptotic stability.  
\end{abstract}

\section{Introduction}

We consider the initial value problem (IVP) to the inhomogeneous nonlinear Schr\"odinger (INLS) equation
\begin{equation} \label{pvi1}
\left\{ \begin{array}{lc}
i\partial_t u + \Delta u + \gamma |x|^{-b}|u|^{\alpha} u = 0, \ (x, t) \in \mathbb{R}^n \times \mathbb{R}, \\
u(x, 0)=\varphi(x), 
\end{array}
\right.
\end{equation}
where $u=u(x, t)$ is a complex-valued function and $\varphi$ is a tempered distribution, $n\geq 1$ and $0<b<2$. The integral equation corresponding to \eqref{pvi1} is 
\begin{equation} \label{EqInt1}
u(t)=e^{it\Delta}\varphi+i\gamma\displaystyle\int_0^te^{i(t-\tau)\Delta}|x|^{-b}(|u|^\alpha u)(\tau)d\tau,
\end{equation}
where $e^{it\Delta}$ is the linear Schr\"odinger group. 

The inhomogeneous nonlinear Schr\"odinger equation is a mathematical model that has received significant attention in recent years due to its relevance in various physical settings and its interesting borderline case from a mathematical point of view. In recent years, there has been a growing interest in studying the well-posedness and scattering theories for the INLS equation. One of the central questions in this field is to establish the global well-posedness of the IVP for the INLS equation. A number of authors have made significant contributions in this direction (see e.g. \cite{C21}, \cite{CF21}, \cite{Farah}, \cite{CARLOS}, \cite{M21}). Note that when $b=0$ in the INLS model, we have the classical nonlinear Schrödinger equation, which has been extensively studied over the last three decades (see Bourgain \cite{Bourgain}, Cazenave \cite{CAZENAVEBOOK}, Linares-Ponce \cite{FELGUS}, Tao \cite{Tao}, and references therein).

Due to Hamiltonian structure of the problem, the solutions for the INLS equation enjoys the followings laws of conservation called mass and energy, respectively
\begin{align}
    M[u(t)]=\int_{\re^n}|u(x,t)|^2\,dx=M[u_0]
\end{align}
and
\begin{align}
    E[u(t)]=\frac{1}{2}\int_{\re^n}|\nabla u(x,t)|^2\,dx-\frac{\gamma}{\alpha+2}\int_{\re^n}|x|^{-b}|u(x,t)|^{\alpha+2}\,dx=E[u_0],
\end{align}
provided $u$ has sufficient regularity.

The INLS equation is invariant under the scaling $u_\mu(t,x)=\mu^{p}u(\mu x, \mu^2 t),  \mu >0,$ where $Re\,p=\frac{2-b}{\alpha}$. This means that if $u$  is a solution of \eqref{pvi1}, with initial data $\varphi$, so is $u_\mu$ with initial data $$u_{\mu,0}=\mu^{p}\varphi(\mu x).$$  A straightforward computation yields to
$$
\|u_{0,\mu}\|_{\dot{H}^s}=\mu^{s-\frac{n}{2}+\frac{2-b}{\alpha}}\|\varphi\|_{\dot{H}^s},
$$
implying that the scale-invariant Sobolev space is $\dot{H}^{s_b}(\mathbb{R}^n)$, with $s_b=\frac{n}{2}-\frac{2-b}{\alpha}$, the so called \textit{critical Sobolev index}. For $s_b = 0$ (or $\alpha = \frac{4-2b}{n}$) the IVP \eqref{pvi1} is known as mass-critical or $L^2$-critical; if $s_b<0$ (or $0<\alpha<\frac{4-2b}{n}$) it is called mass-subcritical or $L^2$-subcritical; if $0<s_b<1$, \eqref{pvi1} is known as mass-supercritical and energy-subcritical (or intercritical).

We briefly review the literature about the well-posedness of  \eqref{pvi1}.  It was first studied by Genoud-Stuart \cite{GENSTU}, where they used the energy method to show the local well-posedness in $H^1(\mathbb{R}^n)$ for the $H^1$-subcritical case, $n\geq 1$ and $0<b<\min\{n,2\}$. They also established global well-posedness in the mass-subcritical case. In the mass-critical case, Genoud in \cite{GENOUD} showed global well-posedness in $H^1(\mathbb{R}^n)$, provided that the mass of the initial data is below that of the associated ground state. This result was extended in the case $\frac{4-2b}{n}<\alpha<\frac{4-2b}{n-2}$ by Farah in \cite{Farah}.

On the other hand, Guzmán in \cite{CARLOS}, using the contraction mapping principle, obtained local well-posedness in $H^1(\mathbb{R}^n)$ for the energy subcritical case in dimensions $n\geq 4$ and $0<b<2$. Cho-Lee \cite{Cho-Lee} treated the case $n=3$ for $0<b<\frac{3}{2}$ and Dinh \cite{Dinh2017} the case $n=2$ for $0<b<1$. Furthermore, in the intercritical case, Guzmán in \cite{CARLOS} also established a small data global theory in $H^1(\mathbb{R}^n)$ for $n\geq 4$, Campos in \cite{C21} treated the case $n=3$, and Cardoso, Farah and Guzmán \cite{CFG20} the case $n=2$. In all these works, the range of $b$ is the same where local well-posedness was obtained. Recentely, Campos, Correira and Farah \cite{CCF22}  established the local well-posedness for intercritical INLS \eqref{pvi1} in homogeneous Sobolev spaces $\dot H^s$ for $n \geq 1$, $0 \leq s \leq 1$ such that $s < n/2$, and $0 < b <\min\{n/2 + 1 - s, n - s, 2\}.$

The problem of the existence of infinity energy global solutions for the nonlinear Schrödinger equation was first studied by Cazenave and Weissler in \cite{CW}, where they investigate the existence of global solutions for the NLS equation (IVP \eqref{pvi1} with $b=0$) for small initial data $\varphi$ with respect to the norm $\|\varphi\|_{L^{\alpha+2}}=\sup_{t>0}t^{\beta_0}\|e^{it\Delta}\varphi\|_{L^{\alpha+2}}$ where $\beta_0=\frac{4-\alpha(n-2)}{2\alpha(\alpha+2)}$. The difficulty in that case lies in determining which functions have this finite norm. They then consider solutions whose initial data are homogeneous to obtain self-similar solutions. In that article, it was explicitly calculate  the evolution of the flow of the linear Schrödinger equation for functions of the form $\varphi(x)=|x|^{-p}$. Other authors have also studied the same problem in the context of the NLS equation (see, for example, \cite{CW00},  \cite{Zhang03},  \cite{P00cauchy}, \cite{ribaud1998regular}). In \cite{BFV09} Braz e Silva, Ferreira and Villamizar-Roa extend the results of Cazenave and Weissler to include global existence and uniqueness, as well as global self-similar solutions, in the context of Lorentz spaces.

Here we intend to extend these results to the INLS setting. Our results are obtained in the range
\begin{align}\label{alpha00}
\alpha_0<\alpha<2^*_b,
\end{align}
where 
\begin{align}\label{alpha0}
2^*_b=\left\{\begin{array}{cc}
    \frac{4-2b}{n-2}, &  if \ \ n\geq 3\\
     \infty, & if \ \ n\leq 2
\end{array}
\right.
\end{align}
and  $\alpha_0$ is the positive root of the equation
$$n\alpha^2+(n-2+2b)\alpha-4+2b=0.$$
For $\alpha$ satisfying the condition \eqref{alpha00}, we define the parameter $\beta$ by
\begin{align}\label{beta}
    \beta=\frac{4-2b-\alpha(n-2)}{2\alpha(\alpha+2)}.
\end{align}
Note that 
\begin{align}\label{nalpha}
\beta(\alpha+1)<1,\,\,\,\frac{n\alpha}{2(\alpha+2-b)}<1
\end{align}
and
\begin{align}
    \beta+1-\frac{n\alpha+2b}{2(\alpha+2)}-\beta(\alpha+1)=0.
\end{align}
Our first result is about the global well-posedness for \eqref{pvi1}. In particular, here we extend the global well-posedness results of Cazenave and Weissler \cite{CW} and Braz e Silva, Ferreira and Villamizar-Roa \cite{BFV09} to the inhomogeneous case $0<b<2.$

\begin{theorem}\label{globalWP} Consider $\alpha$ satisfying \eqref{alpha00}, $r=\frac{n(\alpha+2)}{n-b}$, $q>r$ and let $\beta$ be given by \eqref{beta}. Suppose further that $\rho>0$ and $M>0$ satisfy the inequality
$$\rho +KM^{\alpha+1}\leq M,$$
with $K=K(\alpha, n,\gamma)$ given by
\begin{align}\label{condK}
    K=2|\gamma| (\alpha+1)(4\pi)^{-\frac{n\alpha}{2(\alpha+2)}}B\left(1-\frac{n\alpha+2b}{2(\alpha+2)}, 1-\beta(\alpha+1)\right),
\end{align}
where $B(\cdot,\cdot)$ is the beta function. Let $\varphi$ be a tempered distribution such that 
\begin{align}\label{cond1}
    \sup_{t>0}|t|^\beta\|e^{it\Delta}\varphi\|_{L^{r,q}}\leq \rho.
\end{align}
Then there exists a unique positively global solution $u$ to \eqref{EqInt1} such that 
\begin{align}\label{cond2}
    \sup_{t>0}|t|^{\beta} \|u(t)\|_{L^{r,q}}\leq M.
\end{align}
\end{theorem}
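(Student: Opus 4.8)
\emph{Proof plan.} The natural approach is a contraction–mapping argument for the Duhamel operator
\[
\Phi(u)(t)=e^{it\Delta}\varphi+i\gamma\int_0^t e^{i(t-\tau)\Delta}\,|x|^{-b}\big(|u|^\alpha u\big)(\tau)\,d\tau
\]
on the complete metric space
\[
E_M=\Big\{u:(0,\infty)\to L^{r,q}(\mathbb R^n)\ \text{Bochner measurable},\ \ \|u\|_{E_M}:=\sup_{t>0}|t|^\beta\|u(t)\|_{L^{r,q}}\le M\Big\},\qquad d(u,v)=\|u-v\|_{E_M},
\]
which is a closed ball in the Banach space $X=\{u:\ \|u\|_X:=\sup_{t>0}|t|^\beta\|u(t)\|_{L^{r,q}}<\infty\}$ (here $1<r<\infty$), hence complete; by \eqref{cond1} and $\rho\le M$ the free evolution $e^{it\Delta}\varphi$ belongs to $E_M$, so $E_M\ne\emptyset$. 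The whole theorem reduces to one nonlinear bound: for $u,v\in E_M$ and $F(w)=|w|^\alpha w$,
\[
\Big\|\int_0^t e^{i(t-\tau)\Delta}|x|^{-b}\big(F(u)-F(v)\big)(\tau)\,d\tau\Big\|_{L^{r,q}}\le K\,M^\alpha\,|t|^{-\beta}\,d(u,v),\qquad t>0,
\]
with $K$ as in \eqref{condK}, together with its $v\equiv0$ analogue $\big\|\int_0^t e^{i(t-\tau)\Delta}|x|^{-b}F(u)(\tau)\,d\tau\big\|_{L^{r,q}}\le K\,M^{\alpha+1}|t|^{-\beta}$.

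To establish this I would combine three Lorentz-space tools. (i) The \emph{dispersive estimate in Lorentz spaces}: real interpolation of $\|e^{it\Delta}\|_{L^1\to L^\infty}\le(4\pi|t|)^{-n/2}$ with the unitarity $\|e^{it\Delta}\|_{L^2\to L^2}=1$ yields, for our $r=\frac{n(\alpha+2)}{n-b}$ and its conjugate $r'$,
\[
\|e^{it\Delta}g\|_{L^{r,q}}\lesssim |t|^{-\sigma}\|g\|_{L^{r',q}},\qquad \sigma:=n\Big(\frac12-\frac1r\Big)=\frac{n\alpha+2b}{2(\alpha+2)},
\]
with a \emph{common} second Lorentz index on both sides. (ii) \emph{Hölder's inequality in Lorentz spaces} (O'Neil) together with the scaling rule $\||w|^{\alpha+1}\|_{L^{p,s}}=\|w\|_{L^{(\alpha+1)p,(\alpha+1)s}}^{\alpha+1}$ and the elementary pointwise bound $|F(u)-F(v)|\le(\alpha+1)\big(|u|^\alpha+|v|^\alpha\big)|u-v|$: the choice of $r$ is precisely the one for which $\tfrac1{r'}=\tfrac bn+\tfrac{\alpha+1}{r}$, so that the potential $|x|^{-b}\in L^{n/b,\infty}(\mathbb R^n)$ — a membership that fails for $L^{n/b}$, which is the whole reason for the Lorentz setting — can be paired off against $|u|^\alpha|u-v|\in L^{r/(\alpha+1),\,q/(\alpha+1)}$ to give, for each fixed $\tau>0$,
\[
\big\||x|^{-b}\big(F(u)-F(v)\big)(\tau)\big\|_{L^{r',q}}\lesssim\big(\|u(\tau)\|_{L^{r,q}}^\alpha+\|v(\tau)\|_{L^{r,q}}^\alpha\big)\,\|u(\tau)-v(\tau)\|_{L^{r,q}}.
\]
(iii) The \emph{time integral}: applying (i) under the integral (Minkowski), inserting (ii) and the bounds $\|u(\tau)\|_{L^{r,q}}\le M\tau^{-\beta}$, $\|u(\tau)-v(\tau)\|_{L^{r,q}}\le d(u,v)\tau^{-\beta}$, one is left with
\[
\int_0^t (t-\tau)^{-\sigma}\tau^{-\beta(\alpha+1)}\,d\tau=t^{\,1-\sigma-\beta(\alpha+1)}\,B\big(1-\sigma,\,1-\beta(\alpha+1)\big)=t^{-\beta}\,B\Big(1-\tfrac{n\alpha+2b}{2(\alpha+2)},\,1-\beta(\alpha+1)\Big),
\]
where convergence at $\tau=t$ requires $\beta(\alpha+1)<1$, convergence at $\tau=0$ requires $\sigma<1$ (equivalently $\alpha<2^*_b$), and the last equality is the identity $\beta+1-\frac{n\alpha+2b}{2(\alpha+2)}-\beta(\alpha+1)=0$ recorded before the statement. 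Carrying the dimensional constants of (i)–(iii) through produces exactly $K$ of \eqref{condK}.

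Granting the nonlinear bound, the argument closes in the usual way. The $v\equiv0$ version and $\rho\le M$ give $\|\Phi(u)\|_{E_M}\le\rho+KM^{\alpha+1}\le M$, so $\Phi(E_M)\subset E_M$; the two-variable version gives $d(\Phi(u),\Phi(v))\le KM^\alpha\,d(u,v)$, and since $\rho>0$ forces $KM^{\alpha+1}<M$, i.e.\ $KM^\alpha<1$, the map $\Phi$ is a contraction on $(E_M,d)$. Banach's fixed-point theorem then yields a unique $u\in E_M$ with $\Phi(u)=u$, which is the claimed unique forward-global solution of \eqref{EqInt1} in the class \eqref{cond2}.

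The real work — and the only genuinely delicate point — is the \emph{sharp accounting} in (i)–(iii): proving the Lorentz dispersive estimate with the explicit constant and the shared second index, applying O'Neil's inequality in precisely the index regime forced by $r=\frac{n(\alpha+2)}{n-b}$, and checking that the two exponent conditions $\sigma<1$ and $\beta(\alpha+1)<1$ hold throughout \eqref{alpha00}. The upper one is $\alpha<2^*_b$ by a one-line computation; the lower one is where the otherwise opaque endpoint $\alpha_0$ — the positive root of $n\alpha^2+(n-2+2b)\alpha-4+2b=0$ — enters, since $\beta(\alpha+1)<1$ is equivalent, after clearing denominators in \eqref{beta}, to $n\alpha^2+(n-2+2b)\alpha-4+2b>0$.
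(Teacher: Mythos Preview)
Your argument is correct and is essentially the paper's own proof: contraction on the time-weighted Lorentz ball via the dispersive estimate in $L^{r,q}$, O'Neil's H\"older inequality pairing $|x|^{-b}\in L^{n/b,\infty}$ against $|u|^{\alpha}u\in L^{r/(\alpha+1),\,q/(\alpha+1)}$ (which is exactly the index balance $\tfrac1{r'}=\tfrac bn+\tfrac{\alpha+1}{r}$), and the Beta-function evaluation of the resulting time convolution. One small slip to fix: the endpoint conditions are swapped (it is $\tau\to0$ that needs $\beta(\alpha+1)<1$ and $\tau\to t$ that needs $\sigma<1$); incidentally, your Lipschitz bound $|F(u)-F(v)|\le(\alpha+1)(|u|^{\alpha}+|v|^{\alpha})|u-v|$ is the correct one---the paper writes $||u|^{\alpha}u-|v|^{\alpha}v|\lesssim|u-v|^{\alpha+1}$ at that step, which is false in general, though the final contraction estimate it records (with the $(2M)^{\alpha}$ factor) is right.
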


The term $|x|^{-b}$ presents a challenge, as it does not belong to usual $L^p$ spaces. Therefore, it is needed to use an appropriate space where we can obtain integrability of this potential. Our study on the existence of solutions for the Schrödinger equation relies on the analysis of the Schrödinger linear group $e^{it\Delta}$ within the framework of Lorentz spaces. 

A natural question about the assumptions of the previous theorem is whether there exist initial data that satisfy such assumptions. In fact, there are, and those are homogeneous initial data as already considered by Cazenave and Weissler \cite{CW00} in the settings of the NLS equation.

Considering homogeneous initial data is a natural choice. To see this we consider the dilation operator $D_\lambda=D_{\lambda,p}$ given by 
\begin{align}
D_{\lambda,p} \varphi(x)= \lambda^{p}\varphi(\lambda x),
\end{align}
where $\lambda > 0$ and $p$ is a fixed complex power such that $0 < Re(p) < n$. From direct computations, we can check that (see Lemma \ref{Dil})
\begin{align}
\|D_{\lambda}\varphi\|_{L^{r,q}} = \lambda^{Re(p)-\frac{n}{r}}\|\varphi\|_{L^{r,q}}
\end{align}
and
\begin{align}
e^{it\Delta}=D_\lambda e^{i\lambda^2t\Delta}D_{\lambda^{-1}}.
\end{align}
Thus, fixed $t>0$, $\lambda=t^{-\frac{1}{2}}$ and $Re(p)=\frac{2-b}{\alpha}$ we have
\begin{align}\label{evohom}
    t^{\beta}\|e^{it\Delta}\varphi\|_{L^{\frac{n(\alpha+2)}{n-b},q}}&=t^{\frac{4-2b-\alpha(n-2)}{2\alpha(\alpha+2)}}\|D_{t^{-\frac{1}{2}}}e^{i\Delta}D_{t^{\frac{1}{2}}}\varphi\|_{L^{\frac{n(\alpha+2)}{n-b},q}}\\
    &=t^{\frac{4-2b-\alpha(n-2)}{2\alpha(\alpha+2)}}t^{-\frac{2-b}{2\alpha}+\frac{n-b}{2(\alpha+2)}}\|e^{i\Delta}D_{t^{\frac12}}\varphi\|_{L^{\frac{n(\alpha+2)}{n-b},q}}\\&=\|e^{i\Delta}D_{t^{\frac12}}\varphi\|_{L^{\frac{n(\alpha+2)}{n-b},q}}.
\end{align}
With this we have an important class of solutions to be considered which are invariant by dilatation $D_{\lambda}$ for all $\lambda>0$ called $p$-homogeneous functions. This motivates the following definition.

\begin{definition}
A solution $u(x, t)$ of equation \eqref{EqInt1} is considered self-similar if for $p$ such that $Re\,p = \frac{2 - b}{\alpha}$ and for all $\lambda > 0$, $$u(x,t)=\lambda^{p}u(\lambda x,\lambda^2t).$$
\end{definition}

Thus, as a consequence of Theorem \ref{globalWP}, we prove that if the initial data $\varphi$ is homogeneous of degree $\frac{2-b}{\alpha}$ and if $\|e^{i\Delta}\varphi\|_{L^{r,q}}<\varepsilon$ for $q>r>0$ and $\varepsilon>0$ sufficiently small, then the corresponding solution $u$ satisfies 
\begin{align}
    u(x,t)=\lambda^{p}u(\lambda x, \lambda^2t),
\end{align}
where $Re(p) = \frac{2-b}{\alpha}$ . More precisely, we show the following result.

\begin{theorem}[Self-similar solutions]\label{selfsimilar}Assume \eqref{alpha0} , $r=\frac{n(\alpha+2)}{n-b}$, $q>r$ and suppose $Re\,p=\frac{2-b}{\alpha}$. If $\varphi $ is a finite linear combination of functions of the form $P_k|x|^{-p-k}$, where $P_k$ is a homogeneous harmonic polynomial of degree $k$ (including $k=0$), then $\|e^{i\Delta}\varphi \|_{L^{r}}$ is finite and  
 \begin{align}
|t|^\beta\|e^{it\Delta}\varphi\|_{L^{r}}=\|e^{i\Delta}\varphi\|_{L^{r}},\,\,\,\forall t>0,
 \end{align}
 where $\beta $ is given by \eqref{beta}. In addition, if $\|e^{i\Delta}\varphi\|_{L^{r}}$ is sufficiently small, there exists a self-similar solution $u$ to \eqref{EqInt1} with initial data $\varphi$, having all the proprieties described in Theorem \ref{globalWP}.
\end{theorem}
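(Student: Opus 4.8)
The statement contains three assertions: finiteness of $\|e^{i\Delta}\varphi\|_{L^{r}}$, the self-similar identity $|t|^{\beta}\|e^{it\Delta}\varphi\|_{L^{r}}=\|e^{i\Delta}\varphi\|_{L^{r}}$ for all $t>0$, and, for small data, the existence of a self-similar solution. I expect the second and third to be soft consequences of the dilation structure already isolated before the statement together with the uniqueness part of Theorem~\ref{globalWP}, so I would dispose of them quickly and concentrate on the first. For the scaling identity, the point is that each building block is $D_{\lambda,p}$-invariant: since $P_k$ is homogeneous of degree $k$,
\[
D_{\lambda,p}\bigl(P_k(x)|x|^{-p-k}\bigr)=\lambda^{p}P_k(\lambda x)\,|\lambda x|^{-p-k}=\lambda^{p+k-p-k}P_k(x)|x|^{-p-k}=P_k(x)|x|^{-p-k},
\]
so $D_{\lambda,p}\varphi=\varphi$ for all $\lambda>0$. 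Taking $q=r$ in \eqref{evohom} (so $L^{r,q}=L^{r}$) and substituting $D_{t^{1/2}}\varphi=\varphi$ yields $|t|^{\beta}\|e^{it\Delta}\varphi\|_{L^{r}}=\|e^{i\Delta}D_{t^{1/2}}\varphi\|_{L^{r}}=\|e^{i\Delta}\varphi\|_{L^{r}}$ for every $t>0$; this is the asserted identity, valid once the right-hand side is known to be finite --- which is the next step.

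The main obstacle is showing $\|e^{i\Delta}\varphi\|_{L^{r}}<\infty$; here I would compute the free evolution of a single homogeneous piece $\varphi_k=P_k(x)|x|^{-p-k}$ explicitly, the general case following by linearity. On the Fourier side, the Bochner-type formula for the transform of a solid harmonic times a radial power gives $\widehat{\varphi_k}(\xi)=c_{n,k,p}\,P_k(\xi)|\xi|^{-(n+k-p)}$, so that $e^{i\Delta}\varphi_k=\mathcal{F}^{-1}\bigl[e^{-i|\xi|^2}c_{n,k,p}P_k(\xi)|\xi|^{-(n+k-p)}\bigr]$; by the spherical-harmonic decomposition this inverse transform is again of the form $P_k(x)\,h_k(|x|)$, where $h_k$ is a one-variable Hankel transform of $\rho\mapsto e^{-i\rho^2}\rho^{-(n+k-p)}$ expressible through a confluent hypergeometric function. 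This is exactly the computation carried out by Cazenave--Weissler~\cite{CW00} for the NLS and by Braz e Silva--Ferreira--Villamizar-Roa~\cite{BFV09} in the Lorentz-space setting; what must be verified is convergence of the defining oscillatory integrals (this forces $0<\Re p<n$, which is built into $\Re p=\frac{2-b}{\alpha}$ in the range considered) together with the size bounds $|h_k(\rho)|\lesssim\rho^{-k}$ as $\rho\to0$ --- so that $P_k h_k$ stays bounded near the origin --- and $|h_k(\rho)|\lesssim\rho^{-k-\Re p}+\rho^{-k-(n-\Re p)}$ as $\rho\to\infty$. Together these give $|e^{i\Delta}\varphi_k(x)|\lesssim(1+|x|)^{-\Re p}+(1+|x|)^{-(n-\Re p)}$, and then $L^{r}$-membership follows by direct integration: near the origin the bound is $O(1)$, hence locally $L^{r}$; near infinity, $|x|^{-\Re p}\in L^{r}(\{|x|>1\})$ exactly when $r\,\Re p>n$, which rearranges to $\alpha<\frac{4-2b}{n-2}$, and $|x|^{-(n-\Re p)}\in L^{r}(\{|x|>1\})$ exactly when $r(n-\Re p)>n$, which rearranges to $n\alpha^{2}+(n-2+2b)\alpha-4+2b>0$, i.e.\ $\alpha>\alpha_0$. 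Thus condition \eqref{alpha00} enters here --- and essentially only here --- and is precisely what makes $\|e^{i\Delta}\varphi_k\|_{L^{r}}$, and hence $\|e^{i\Delta}\varphi\|_{L^{r}}$, finite. I expect the delicate part to be reproducing, and keeping track of the constants in, this special-function estimate for the free Schrödinger evolution of homogeneous data.

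For the self-similar solution, assuming $\|e^{i\Delta}\varphi\|_{L^{r}}$ small, I would use $L^{r}=L^{r,r}\hookrightarrow L^{r,q}$ for $q>r$ and the identity just proved to obtain $\sup_{t>0}|t|^{\beta}\|e^{it\Delta}\varphi\|_{L^{r,q}}\lesssim\sup_{t>0}|t|^{\beta}\|e^{it\Delta}\varphi\|_{L^{r}}=\|e^{i\Delta}\varphi\|_{L^{r}}=:\rho$; for $\rho$ small there exists $M>0$ with $\rho+KM^{\alpha+1}\le M$, and Theorem~\ref{globalWP} then produces a unique global solution $u$ of \eqref{EqInt1} with data $\varphi$ satisfying $\sup_{t>0}|t|^{\beta}\|u(t)\|_{L^{r,q}}\le M$ and enjoying all the properties stated there. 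To see $u$ is self-similar, fix $\lambda>0$ and set $u_\lambda(x,t)=\lambda^{p}u(\lambda x,\lambda^2 t)$: since $\Re p=\frac{2-b}{\alpha}$ is exactly the scaling power leaving \eqref{EqInt1} invariant, $u_\lambda$ again solves \eqref{EqInt1}, now with initial datum $D_{\lambda,p}\varphi=\varphi$; and, by the dilation identity of Lemma~\ref{Dil} combined with the one-line algebraic relation $\Re p-\frac{n}{r}-2\beta=0$ read off from \eqref{beta}, one finds $|t|^{\beta}\|u_\lambda(t)\|_{L^{r,q}}=|\lambda^2 t|^{\beta}\|u(\lambda^2 t)\|_{L^{r,q}}\le M$, so $u_\lambda$ lies in the same ball as $u$. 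The uniqueness part of Theorem~\ref{globalWP} then forces $u_\lambda=u$ for every $\lambda>0$, which is the self-similarity $u(x,t)=\lambda^{p}u(\lambda x,\lambda^2 t)$.
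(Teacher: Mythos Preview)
Your proposal is correct and follows essentially the same route as the paper: the scaling identity via \eqref{evohom} and the $D_{\lambda,p}$-invariance of $\varphi$, finiteness of $\|e^{i\Delta}\varphi\|_{L^{r}}$ from the Cazenave--Weissler computation (which the paper packages as Proposition~\ref{propcaz} and invokes as a black box, whereas you sketch its proof and explicitly rearrange the integrability conditions $r\,\Re p>n$ and $r(n-\Re p)>n$ into $\alpha<\tfrac{4-2b}{n-2}$ and $\alpha>\alpha_0$), the embedding $L^{r}\hookrightarrow L^{r,q}$, and self-similarity by uniqueness in Theorem~\ref{globalWP}. The only difference is the level of detail you supply for the finiteness step, not the strategy.
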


To see the novelty of this work, it is important to note that all known results concerning the global well-posedness of solutions to this model in the intercritical case ($\frac{4-2b}{n}<\alpha<2^*_b$) have been established in the non-homogeneous Sobolev spaces $H^s$ or in the homogeneous spaces $\dot H^s$ with $0\leq s\leq 1$. However, there exist solutions, such as self-similar solutions, that do not belong to these spaces. In fact, if we assume that $u$ is a self-similar solution to equation \eqref{pvi1} with initial data $u_0$, we can show that $\|u(t)\|_{L^2}$ and $E[u(t)]$ are not conserved quantities, which contradicts the conservation laws of mass and energy for the equation in \eqref{pvi1} if we assume $\alpha_0<\alpha<2_b^*$ and  that $u(t) \in L^2$ or $u(t) \in \dot H^1 \cap L^{\alpha+2}(|x|^{-b})$. To be more specific, consider the following calculation
\begin{align*}
\|u_0\|_{L^2}& = \|\lambda^{p}u(\lambda x, \lambda^2t)\|_{L^2}= \lambda^{\frac{2-b}{\alpha}}\|u(\lambda x, \lambda^2t)\|_{L^2}=\lambda^{-\frac{n}{2}+\frac{2-b}{\alpha}}\|u( \lambda^2t)\|_{L^2}=\lambda^{-s_b}\|u_0\|_{L^2}
\end{align*}
and
\begin{align}
E[u_0]=E[\lambda^{p}u(\lambda x, \lambda^2 t)]=\lambda^{1-s_b}E[u_0],
\end{align}
for all $\lambda>0$, where $s_b=\frac{n}{2}-\frac{2-b}{\alpha}$. In particular, this implies that if $\alpha_0<\alpha<2^*_b$, then the corresponding self-similar solution with initial data $u_0$ admits infinite energy and is not in the $H^s$ space. In the case of homogeneous spaces $\dot H^s$, global well-posedness is allowed only if $s=s_b$ . This can be seen by a similar calculation
\begin{align*}
\|u(t)\|_{\dot H^s}& = \|t^{-\frac{p}{2}}u(t^{-\frac12} x, 1)\|_{\dot H^s}\\&= t^{-\frac{2-b}{2\alpha}}\|u(t^{-\frac12} x, 1)\|_{\dot H^s}\\&=t^{-\frac{s}{2}+\frac{n}{4}-\frac{2-b}{2\alpha}}\|u(1)\|_{\dot H^s}\\&=t^{\frac{s_b-s}{2}}\|u(1)\|_{\dot H^s}.
\end{align*}

\begin{remark}
   \end{remark}
    \begin{enumerate}
    {\it
        \item[$(1)$] One way to study self-similar solutions to nonlinear evolution equations is by analyzing the corresponding nonlinear elliptic equations. In particular, for a self-similar solution $u$ to equation \eqref{pvi1}, we can express it as $u(x, t)=t^{-\frac{p}{2}}f\left(t^{-\frac12}x\right)$, where $f=u(\cdot,1)$. To analyze $f$, we obtained the corresponding elliptic equation given by
        \begin{equation}\label{EqEl}
        \Delta v - \frac{i}{\alpha} v - \frac{i}{2} x \cdot \nabla v = \gamma |x|^{-b} |v|^{\alpha} v.
        \end{equation}
where $v=f(\cdot)$. This approach has been used in previous studies, such as \cite{kw1}, \cite{kw2}, in the case of the classical NLS. However, solving the elliptic equation \eqref{EqEl} is generally a challenging task.

        \item[$(2)$] Planchon \cite{P00cauchy} established the global well-posedness for the nonlinear Schrödinger equation (case $b=0$) in Besov spaces $\dot B^{s_0,\infty}$ for $\alpha>4/n$ and $\alpha\in 2\mathbb N\backslash {0}$. In particular, he obtained the existence of self-similar solutions and, due to the continuous embedding of $\dot H^{s_0}$ into $\dot B^{s_0,\infty}$, the well-posedness in the critical Sobolev space $\dot H^{s_0}$. This result was later improved by Miao-Zhang-Zhang \cite{Zhang03} for the case where $\alpha>4/n$ and $\alpha\notin 2\mathbb N$. We believe that similar results can be obtained for the INLS configuration and we will address this in a future work.}
       
    \end{enumerate}
    
Once global results are proved, the natural route is to study the asymptotic behavior of such global solutions as $t \to \infty$. We prove that our solutions scatter to a solution of the linear problem in $L^{r,q}$. Moreover, we construct the wave operator associated with equation \eqref{pvi1}. This corresponds to the reciprocal problem in scattering theory, which involves constructing a solution with a prescribed scattering state. Inspired in Cazenave and Weissler \cite{CW00} we show the following results.
\begin{theorem}[Scattering]\label{teo scattering}
    Suppose $\alpha_0<\alpha<2^*_b$. Let $\mathcal X$ and $\mathcal W$ be the spaces defined in \eqref{class init} and \eqref{W}, respectively. If $u\in \mathcal{X}$ is a solution of \eqref{pvi1}, then there exists $\psi_+ \in\mathcal{W}$ such that 
    \begin{align}
        \|e^{-it\Delta}u(t)-\psi_+\|_{L^{r,q}}\leq Ct^{-\beta}|\!|\!| u |\!|\!|^{\alpha+1},
    \end{align}
    for all $t>0$, where $| \! | \! | u | \! | \! | : = \displaystyle\sup_{t > 0} t^{\beta}\| u(t) \|_{L^{r, q}}.$
\end{theorem}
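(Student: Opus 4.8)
The plan is to run the classical Cazenave--Weissler scattering argument in the Lorentz setting, reusing the estimates that already power the fixed point scheme behind Theorem~\ref{globalWP}. Applying $e^{-it\Delta}$ to the Duhamel formula \eqref{EqInt1} gives, for $0<s\le t$,
$$e^{-it\Delta}u(t)-e^{-is\Delta}u(s)=i\gamma\int_s^t e^{-i\tau\Delta}\bigl(|x|^{-b}|u|^{\alpha}u\bigr)(\tau)\,d\tau ,$$
and the right-hand side — a Bochner integral over an interval bounded away from $0$, hence convergent in $L^{r,q}$ by the estimate below — has $L^{r,q}$ norm tending to $0$ as $s,t\to\infty$. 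Thus $e^{-it\Delta}u(t)$ is Cauchy as $t\to\infty$ up to a fixed additive tempered distribution, and letting $\psi_+$ denote the limiting tempered distribution (this is the scattering state) one obtains the identity
$$e^{-it\Delta}u(t)-\psi_+=-i\gamma\int_t^\infty e^{-i\tau\Delta}\bigl(|x|^{-b}|u|^{\alpha}u\bigr)(\tau)\,d\tau\qquad\text{in }L^{r,q},$$
so that the theorem reduces to estimating this tail integral.

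The key analytic input is the dispersive estimate in Lorentz spaces, $\|e^{i\sigma\Delta}f\|_{L^{r,q}}\lesssim|\sigma|^{-a}\|f\|_{L^{r',q}}$ with $a:=n\bigl(\tfrac12-\tfrac1r\bigr)=\tfrac{n\alpha+2b}{2(\alpha+2)}$, combined with Hölder's inequality in Lorentz spaces and $|x|^{-b}\in L^{n/b,\infty}$ — the exponents match because $\tfrac1{r'}=\tfrac bn+\tfrac{\alpha+1}{r}$ — together with the defining bound $\|u(\tau)\|_{L^{r,q}}\le\tau^{-\beta}\,|\!|\!|u|\!|\!|$. Chaining these, I would obtain for $0<s\le t\le\infty$
$$\Bigl\|\int_s^t e^{-i\tau\Delta}\bigl(|x|^{-b}|u|^{\alpha}u\bigr)(\tau)\,d\tau\Bigr\|_{L^{r,q}}\le C\,|\!|\!|u|\!|\!|^{\alpha+1}\int_s^t\tau^{-a-\beta(\alpha+1)}\,d\tau ,$$
which is essentially the estimate used in the contraction argument for Theorem~\ref{globalWP}, now run on a tail interval rather than on $(0,t)$. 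By the identity $\beta+1-\tfrac{n\alpha+2b}{2(\alpha+2)}-\beta(\alpha+1)=0$ recorded in the Introduction, $a+\beta(\alpha+1)=\beta+1$; since $\alpha<2^*_b$ forces $\beta>0$, this exponent exceeds $1$, so $\int_t^\infty\tau^{-a-\beta(\alpha+1)}\,d\tau=\beta^{-1}t^{-\beta}$ for every $t>0$. Feeding this into the identity of the previous paragraph yields $\|e^{-it\Delta}u(t)-\psi_+\|_{L^{r,q}}\le\beta^{-1}C|\gamma|\,|\!|\!|u|\!|\!|^{\alpha+1}\,t^{-\beta}$ for all $t>0$, which is the asserted bound.

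It remains to verify $\psi_+\in\mathcal W$. For fixed $t>0$ I would distribute $e^{it\Delta}$ and split the integral at $\tau=t$, getting $e^{it\Delta}\psi_+=u(t)+i\gamma\int_t^\infty e^{i(t-\tau)\Delta}\bigl(|x|^{-b}|u|^{\alpha}u\bigr)(\tau)\,d\tau$; the same ingredients as above, now also using $a<1$ (which holds under our hypothesis $\alpha<2^*_b$, cf.\ \eqref{nalpha}) for integrability near $\tau=t$, bound the integral by $C\,|\!|\!|u|\!|\!|^{\alpha+1}\int_t^\infty(\tau-t)^{-a}\tau^{-\beta(\alpha+1)}\,d\tau=C\,B(1-a,\beta)\,|\!|\!|u|\!|\!|^{\alpha+1}t^{-\beta}$. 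Together with $\|u(t)\|_{L^{r,q}}\le t^{-\beta}\,|\!|\!|u|\!|\!|$ this gives $\sup_{t>0}t^{\beta}\|e^{it\Delta}\psi_+\|_{L^{r,q}}<\infty$, i.e.\ $\psi_+$ belongs to the space $\mathcal W$ of \eqref{W}.

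Because the Lorentz dispersive estimate, Hölder in Lorentz spaces, and the handling of $|x|^{-b}\in L^{n/b,\infty}$ are all already established in the course of proving Theorem~\ref{globalWP}, no genuinely new estimate is needed here; the argument is essentially a rerun of the global-existence estimates on a tail interval. The one point deserving care is the bookkeeping of Lorentz exponents in Hölder's inequality, ensuring that $|x|^{-b}|u|^{\alpha}u$ lands precisely in the dual space attached to the dispersive estimate so that the time-integral exponents come out as above; once that is checked, the proof is just the evaluation of the Beta-type integrals in time together with the identity $a+\beta(\alpha+1)=\beta+1$.
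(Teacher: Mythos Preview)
Your proposal is correct and follows essentially the same route as the paper: the dispersive estimate in Lorentz spaces combined with the H\"older/$|x|^{-b}\in L^{n/b,\infty}$ step gives the pointwise-in-time bound $\|e^{-i\tau\Delta}(|x|^{-b}|u|^{\alpha}u)(\tau)\|_{L^{r,q}}\lesssim\tau^{-a-\beta(\alpha+1)}|\!|\!|u|\!|\!|^{\alpha+1}$, and the identity $a+\beta(\alpha+1)=\beta+1$ then yields the $t^{-\beta}$ decay; the membership $\psi_+\in\mathcal W$ is obtained by the same $B(1-a,\beta)$ computation you describe. The only cosmetic difference is that the paper defines $\psi_+$ directly by the formula $\psi_+=\varphi+i\gamma\int_0^{\infty}e^{-i\tau\Delta}(|x|^{-b}|u|^{\alpha}u)\,d\tau$ and then verifies the estimates, whereas you first prove $\{e^{-it\Delta}u(t)-\varphi\}_{t>0}$ is Cauchy in $L^{r,q}$ and define $\psi_+$ as $\varphi$ plus the limit; the two are equivalent.
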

\begin{theorem}[Wave operator]\label{teowo} Assume $\rho, M>0$ satisfy the inequality
\begin{align}
    \rho+KM^{\alpha+1}\leq M,
\end{align}
    where $K=K(\alpha,\gamma)$ is given by \eqref{condK}. Let $\psi\in \mathcal{W}$ be such that $|\!|\!| e^{it \Delta}\psi|\!|\!|\leq \rho$. Then there exists a unique solution $u\in \mathcal X$ of \eqref{pvi1} such that $|\!|\!|u|\!|\!|\leq M$ and 
    \begin{align}
        \|e^{-it\Delta}u(t)-\psi\|_{L^{r,q}}\to 0
    \end{align}
    as $t\to \infty.$
\end{theorem}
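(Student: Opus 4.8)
The strategy is to solve \eqref{EqInt1} ``from $+\infty$''. On the ball $\mathcal{B}_M=\{u\in\mathcal{X}:|\!|\!| u|\!|\!|\le M\}$, a closed subset of $\mathcal{X}$ and hence complete for the metric induced by $|\!|\!|\cdot|\!|\!|$, define
\begin{align}
\Phi(u)(t)=e^{it\Delta}\psi-i\gamma\int_t^{\infty}e^{i(t-\tau)\Delta}|x|^{-b}(|u|^{\alpha}u)(\tau)\,d\tau .
\end{align}
A fixed point $u=\Phi(u)$ is the candidate solution: it will satisfy the Duhamel identity on $(0,\infty)$, hence be a solution of \eqref{pvi1} in $\mathcal{X}$ (with data $\varphi=u(0^{+})$, a tempered distribution obtained as the $\mathcal{S}'$-limit of $e^{-it\Delta}u(t)$ as $t\to0^{+}$), and by construction $e^{-it\Delta}u(t)\to\psi$.

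First I would check that $\Phi$ is a well-defined contraction on $\mathcal{B}_M$. For the linear part, $t^{\beta}\|e^{it\Delta}\psi\|_{L^{r,q}}\le|\!|\!| e^{it\Delta}\psi|\!|\!|\le\rho$ by hypothesis. For the nonlinear part one runs exactly the estimates behind Theorem~\ref{globalWP}: the Lorentz dispersive estimate $\|e^{i(t-\tau)\Delta}f\|_{L^{r,q}}\lesssim|t-\tau|^{-\sigma}\|f\|_{L^{r',q'}}$ with $\sigma=\tfrac{n\alpha+2b}{2(\alpha+2)}$, combined with Hölder in Lorentz spaces (using $|x|^{-b}\in L^{n/b,\infty}(\re^{n})$ and $\||u|^{\alpha}u\|_{L^{r/(\alpha+1),q/(\alpha+1)}}\lesssim\|u\|_{L^{r,q}}^{\alpha+1}$), reduce the Duhamel term to
\begin{align}
\Big\|\int_t^{\infty}e^{i(t-\tau)\Delta}|x|^{-b}(|u|^{\alpha}u)(\tau)\,d\tau\Big\|_{L^{r,q}}\lesssim|\!|\!| u|\!|\!|^{\alpha+1}\int_t^{\infty}(\tau-t)^{-\sigma}\tau^{-\beta(\alpha+1)}\,d\tau .
\end{align}
The substitution $\tau=t/s$ turns the time integral into a Beta integral; it converges because $\sigma<1$ (guaranteed by $\alpha<2^{*}_{b}$) at $s=1$ and because $\sigma+\beta(\alpha+1)>1$ at $s=0$ (equivalently $\beta>0$), and it equals a constant times $t^{\,1-\sigma-\beta(\alpha+1)}=t^{-\beta}$ thanks to the identity $\sigma+\beta(\alpha+1)=\beta+1$ among the relations following \eqref{nalpha}. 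Hence $|\!|\!|\Phi(u)|\!|\!|\le\rho+KM^{\alpha+1}\le M$ with $K$ as in \eqref{condK}, and the analogous Lipschitz bound — using the pointwise estimate for $z\mapsto|z|^{\alpha}z$ — gives $|\!|\!|\Phi(u)-\Phi(v)|\!|\!|\le\theta\,|\!|\!| u-v|\!|\!|$ with $\theta<1$, again because $\rho+KM^{\alpha+1}\le M$. Banach's theorem produces the unique fixed point $u\in\mathcal{B}_M$.

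Next, $u$ is genuinely a solution of \eqref{pvi1}: subtracting $u=\Phi(u)$ at times $0<s<t$ and using $e^{i(t-s)\Delta}e^{i(s-\tau)\Delta}=e^{i(t-\tau)\Delta}$ gives $u(t)=e^{i(t-s)\Delta}u(s)+i\gamma\int_s^{t}e^{i(t-\tau)\Delta}|x|^{-b}(|u|^{\alpha}u)(\tau)\,d\tau$, i.e.\ \eqref{EqInt1} on $(0,\infty)$. Applying $e^{-it\Delta}$ to $u=\Phi(u)$ gives $e^{-it\Delta}u(t)-\psi=-i\gamma\int_t^{\infty}e^{-i\tau\Delta}|x|^{-b}(|u|^{\alpha}u)(\tau)\,d\tau$, and since
\begin{align}
\big\|e^{-i\tau\Delta}|x|^{-b}(|u|^{\alpha}u)(\tau)\big\|_{L^{r,q}}\lesssim\tau^{-\sigma}\tau^{-\beta(\alpha+1)}M^{\alpha+1}=M^{\alpha+1}\tau^{-(\beta+1)},
\end{align}
which is integrable at $+\infty$ because $\beta>0$, we get $\|e^{-it\Delta}u(t)-\psi\|_{L^{r,q}}\lesssim M^{\alpha+1}t^{-\beta}\to0$. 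Uniqueness among all solutions in $\mathcal{X}$ with $|\!|\!|\cdot|\!|\!|\le M$ scattering to $\psi$ then follows by reconstruction: for any such $u$, the Duhamel identity on $(s,t)$ and $t\to\infty$ (the tail converging in $L^{r,q}$ by the same $\tau^{-(\beta+1)}$ bound) force $u(s)=e^{is\Delta}\psi-i\gamma\int_s^{\infty}e^{i(s-\tau)\Delta}|x|^{-b}(|u|^{\alpha}u)(\tau)\,d\tau=\Phi(u)(s)$, so $u$ is the fixed point.

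I do not expect a serious obstacle: the nonlinear estimate is structurally identical to the one used for Theorem~\ref{globalWP}, the only change being that the time integral $\int_0^t$ is replaced by $\int_t^{\infty}$, and this is absorbed by the dilation-type change of variables $\tau\mapsto t/\tau$ (consistent with the invariance exhibited in \eqref{evohom}). The two genuinely new points are mild — the decay $e^{-it\Delta}u(t)\to\psi$, which rests only on the integrability of $\tau^{-(\beta+1)}$ near $+\infty$ (that is, $\beta>0$), and the reconstruction step making uniqueness hold among all scattering solutions rather than merely among fixed points of $\Phi$. The delicate part, inherited from Theorem~\ref{globalWP}, is keeping the constant in the nonlinear estimate controlled by the $K$ of \eqref{condK}, so that the single inequality $\rho+KM^{\alpha+1}\le M$ simultaneously yields the self-mapping and the contraction.
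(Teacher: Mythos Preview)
Your proof is correct and follows essentially the same route as the paper: define the ``backward'' Duhamel map $\Phi$ (the paper calls it $Q_+$), reduce the nonlinear term via the Lorentz dispersive estimate and H\"older to the time integral $\int_t^{\infty}(\tau-t)^{-\sigma}\tau^{-\beta(\alpha+1)}\,d\tau$, convert it to a Beta integral equal to a constant times $t^{-\beta}$, and close the fixed-point argument; then apply $e^{-it\Delta}$ and use the integrability of $\tau^{-(\beta+1)}$ to get the decay $\|e^{-it\Delta}u(t)-\psi\|_{L^{r,q}}\lesssim t^{-\beta}$. Your explicit verification that the fixed point satisfies the Duhamel identity on $(0,\infty)$ and your reconstruction argument for uniqueness among scattering solutions are welcome additions that the paper leaves implicit.
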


Furthermore, we examine the asymptotic stability of the global solutions and prove that regular perturbations of the linear Schrödinger equation have negligible effects over long periods. 

\begin{theorem}\label{asymptsta} Let $0 \leq h < 1 - \beta (\alpha + 1)$, $r=\frac{n(\alpha+2)}{n-b}$, $q>r$ and $\phi, \varphi \in L^{r, q}(\mathbb{R}^n)$ satisfying
\begin{equation}\label{Lasym}
\lim_{t \rightarrow + \infty} t^{\beta + h}\|e^{it\Delta} (\phi - \varphi)\|_{L^{r,q}} = 0.
\end{equation}
Let $u, v$ be the global solutions provided by Theorem \ref{globalWP} corresponding to the initial data $\phi$ and $\varphi$ respectively such that
\begin{equation}\label{condasymp}
(2M)^{\alpha}B\left(1 -\frac{n}{2}\left( \frac{1}{r'} - \frac{1}{r}\right), 1-\beta(\alpha + 1) - h\right)
\end{equation}
is sufficiently small. Then
\begin{equation}\label{NLasym}
\lim_{t \rightarrow + \infty} t^{\beta + h}\| u(t) - v(t)\|_{L^{r,q}} = 0.
\end{equation}
\end{theorem}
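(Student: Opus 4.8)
The plan is to run a difference estimate on the integral equation \eqref{EqInt1} analogous to the contraction argument behind Theorem \ref{globalWP}, but now weighted by the extra power $t^{h}$ and carried out along the evolution rather than as a fixed point. First I would write, for $t>0$,
\begin{align}
u(t)-v(t)=e^{it\Delta}(\phi-\varphi)+i\gamma\int_0^t e^{i(t-\tau)\Delta}|x|^{-b}\bigl(|u|^\alpha u-|v|^\alpha v\bigr)(\tau)\,d\tau,
\end{align}
take the $L^{r,q}$ norm, multiply by $t^{\beta+h}$, and set $N(t):=\sup_{0<\tau<t}\tau^{\beta+h}\|u(\tau)-v(\tau)\|_{L^{r,q}}$. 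The linear term is controlled by hypothesis \eqref{Lasym}: it tends to $0$ as $t\to\infty$. For the nonlinear term I would use the pointwise bound $\bigl||u|^\alpha u-|v|^\alpha v\bigr|\lesssim (|u|^\alpha+|v|^\alpha)|u-v|$, then apply the same Strichartz/dispersive estimate in Lorentz spaces (with the same Hölder bookkeeping in the exponents $r,r'$ and the potential $|x|^{-b}$) that produced the constant $K$ in \eqref{condK}, together with the a priori bounds $\sup_\tau \tau^\beta\|u(\tau)\|_{L^{r,q}}\le M$ and the same for $v$. This yields, for $t$ large,
\begin{align}
t^{\beta+h}\|u(t)-v(t)\|_{L^{r,q}}\le t^{\beta+h}\|e^{it\Delta}(\phi-\varphi)\|_{L^{r,q}}+C(2M)^\alpha\, \widetilde B\cdot N(t),
\end{align}
where $\widetilde B=B\!\left(1-\tfrac n2\bigl(\tfrac1{r'}-\tfrac1r\bigr),\,1-\beta(\alpha+1)-h\right)$ is exactly the Beta-function factor appearing in \eqref{condasymp}; the constraint $0\le h<1-\beta(\alpha+1)$ is what makes the second Beta argument positive so the time integral converges.

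The key point in the time-integral estimate is the scaling bookkeeping: after the dispersive decay $\|e^{i(t-\tau)\Delta}f\|_{L^{r,q}}\lesssim (t-\tau)^{-\frac n2(\frac1{r'}-\frac1r)}\|f\|_{L^{r',q'}}$ and Hölder in the Lorentz scale to absorb $|x|^{-b}$ and the $\alpha+1$ factors of $u,v$, one is left with an integral of the form $\int_0^t (t-\tau)^{-\frac n2(\frac1{r'}-\frac1r)}\tau^{-\beta(\alpha+1)-h}\,d\tau\cdot N(t)$, and the identity $\beta+1-\frac{n\alpha+2b}{2(\alpha+2)}-\beta(\alpha+1)=0$ recorded in the excerpt, combined with $\tfrac n2(\tfrac1{r'}-\tfrac1r)=\tfrac{n\alpha+2b}{2(\alpha+2)}$ for $r=\tfrac{n(\alpha+2)}{n-b}$, is precisely what makes the powers of $t$ on both sides match, producing the clean factor $t^{-\beta-h}\cdot t^{\beta+h}=1$ and the Beta function $\widetilde B$. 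I would verify this exponent identity carefully, since it is the structural heart of the argument.

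Once the displayed inequality holds for all large $t$, smallness of \eqref{condasymp} means $C(2M)^\alpha\widetilde B\le 1/2$ (say), so absorbing the $N(t)$ term gives
\begin{align}
N(t)\le 2\,\sup_{0<\tau\le t}\tau^{\beta+h}\|e^{i\tau\Delta}(\phi-\varphi)\|_{L^{r,q}}+\varepsilon(t),
\end{align}
where one must be a little careful: strictly, the supremum in $N(t)$ should be split into a fixed bounded part over $\tau\in(0,T_0]$ and the tail, since the weight $t^{\beta+h}\|e^{it\Delta}(\phi-\varphi)\|_{L^{r,q}}$ is only assumed to vanish at infinity, not to be globally bounded. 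I would handle this by first noting that Theorem \ref{globalWP} already gives $\tau^\beta\|u(\tau)-v(\tau)\|_{L^{r,q}}$ bounded, so for $\tau$ in a compact set $\tau^{\beta+h}\|u-v\|_{L^{r,q}}$ is bounded, and then taking $t\to\infty$: the tail of the linear term vanishes by \eqref{Lasym}, and a standard $\limsup$ argument (the contribution of any fixed finite window gets killed by an extra factor $\to 0$ after one more iteration, or simply because we can restart the estimate from a large time $T_1$ where $u,v$ are already close) forces $\limsup_{t\to\infty}N(t)=0$, which is \eqref{NLasym}. The main obstacle I anticipate is this last bookkeeping step — propagating the ``vanishing at infinity'' (as opposed to merely ``small'') through the Duhamel term without a global-in-time bound on the weighted linear flow — rather than the Strichartz estimates themselves, which are already in place from the proof of Theorem \ref{globalWP}.
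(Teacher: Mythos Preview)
Your approach coincides with the paper's: the same Duhamel difference, the same dispersive--H\"older chain in Lorentz spaces producing exactly the Beta factor $\widetilde B$ in \eqref{condasymp}, and the same absorption by smallness. The only organizational difference is in the endgame. The paper sets $A:=\limsup_{t\to\infty}t^{\beta+h}\|u(t)-v(t)\|_{L^{r,q}}$, \emph{asserts without proof} that $A<\infty$, passes the $\limsup$ inside the time integral (after the change of variable $\tau=st$) in the inequality you derived, and obtains $A\le C(2M)^\alpha\widetilde B\,A$, hence $A=0$. Your running supremum $N(t)$ is in fact what fills that gap: absorbing at each finite $t$ and using that $\sup_{\tau>0}\tau^{\beta+h}\|e^{i\tau\Delta}(\phi-\varphi)\|_{L^{r,q}}<\infty$ (from \eqref{Lasym} together with $\tau^{\beta}\|e^{i\tau\Delta}(\phi-\varphi)\|_{L^{r,q}}\le 2\rho$ near $\tau=0$) yields $\sup_t N(t)<\infty$, hence $A<\infty$. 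One caution on your last line: ``$\limsup_{t\to\infty}N(t)=0$'' cannot hold since $N$ is nondecreasing; what you mean, and what the paper actually does, is to take the $\limsup$ of $t^{\beta+h}\|u(t)-v(t)\|_{L^{r,q}}$ itself in your displayed inequality once $A<\infty$ is known, which immediately gives $A\le C(2M)^\alpha\widetilde B\,A$ and hence $A=0$.
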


The paper is organized as follows. In Section \ref{sec3}, we introduce some notations and give a review on Lorentz spaces and its properties. The Section \ref{sec4} is dedicated to proof the global well-posedness for equation \eqref{pvi1} applying the contraction mapping principle. In Section \ref{sec5} we  discuss the existence of self-similar solutions for equation \ref{pvi1}. The Section \ref{sec7} is devoted to show the scattering and wave operator results. We finish proving the asymptotic stability solutions for \eqref{pvi1}.

%
%



\section{Notation and preliminary estimates}\label{sec3} 

Given any positives constants $C,$ $D,$ by
$C\lesssim D$ we mean that there exists a constant $c>0$ such that
$C\leq cD$; and, by $C\sim D$ we mean $C\lesssim D$ and $D\lesssim
C$. $L^{p}(\re^n)$-norms will be written as $\parallel\cdot\parallel_{L^{p}}$.

\subsection{Lorentz spaces}
For $0< p, q \leq \infty$ we define the Lorentz space $L^{p, q}(\mathbb{R}^n)$ as the space of all measurable functions $f:\mathbb{R}^n\to\mathbb{C}$ such that 
$$
\| f \|_{L^{p, q}} := \left\{\begin{array}{rl}
	\left(\dfrac{q}{p}\displaystyle \int_0^{+ \infty} \left[t^{\frac{1}{p}} f^{\ast}(t)\right]^q \dfrac{dt}{t}\right)^{\frac{1}{q}}, & \mbox{if } \ \ q < \infty\\
	& \\
	\displaystyle\sup_{t > 0} \ t^{\frac{1}{p}} f^{\ast}(t), & \mbox{if } \ \ q = \infty
	\end{array}
\right.
$$
is finite, where
$$f^{\ast}(t):=\inf\{\,\lambda>0\,;\,d_f(\lambda)\leq t\,\},$$
with $d_f(\lambda)=|\{\,x\in\re^{n}\,;\;|f(x)|>\lambda\,\}|$
being the distribution function of $f$ on $(0,\infty)$.

It is known that $L^{p,q}$ are quasinormed and quasi-Banach  spaces for all $0<p,q\leq\infty$. Nevertheless for  $1< p<\infty$ and $1\leq q\leq\infty$ they can be turned into Banach spaces, endowed with the norm (cf. Bergh $\&$ Löfström \cite{BL} and O'Neil \cite{ON})
$$
\| f \|'_{L^{p, q}} 
:= 
\left\{
\begin{array}{rl}
	\left(\dfrac{q}{p}\displaystyle \int_0^{+ \infty} \left[t^{\frac{1}{p}} f^{\ast \ast}(t)\right]^q \dfrac{dt}{t}\right)^{\frac{1}{q}}, & 1< p<\infty, \ \  1 \leq q < \infty\\
	& \\
	\displaystyle\sup_{t > 0} \ t^{\frac{1}{p}} f^{\ast \ast}(t), & 1 < p \leq \infty,\ \  q = \infty
	\end{array}
\right.
$$
where
$$f^{\ast \ast}(t) := \dfrac{1}{t}\int_0^{\infty} f^{\ast}(s) ds.$$
Since $\| f \|'_{L^{p, q}} \sim\| f \|_{L^{p, q}}$  we shall use $\| f \|_{L^{p, q}} $ to perform our estimates.

\begin{remark}
    It is worth to note that $L^{p}\hookrightarrow L^{p,q}$ if $p\leq q.$
\end{remark}

The following generalized Hölder inequality will be crucial to the success of our approach on global well-posedness.

\begin{lemma}\label{l1p} Let $1<p_1,p_2<\infty$ be such that $\frac1{p_1}+\frac1{p_2}<1$,  and consider $1\leq q_1,q_2\leq\infty$. If $f\in L^{p_1,q_1}(\R^n)$ and $g\in L^{p_2,q_2}(\R^n)$ then  $fg\in L^{p,q}(\R^n)$, where
	$$\dfrac{1}{p} = \dfrac{1}{p_1} + \dfrac{1}{p_2}, \ \ \dfrac{1}{q} \leq \dfrac{1}{q_1} + \dfrac{1}{q_2}.$$
 Moreover, there exists a constant $C=C(p,q,,p_1,q_1,p_2,q_2)$ such that
	$$\| f g\|_{L^{p, q}} \leq C\| f \|_{L^{p_1, q_1}}\|g\|_{L^{p_2, q_2}}.$$
\end{lemma}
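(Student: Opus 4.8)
The plan is to derive this generalized Hölder inequality (which is essentially a special case of O'Neil's inequality \cite{ON}) from an elementary pointwise bound on the decreasing rearrangement of a product, followed by a reduction of the second-index condition to an equality and a final application of the scalar Hölder inequality on the half-line $(0,\infty)$ equipped with the scale-invariant measure $dt/t$. Throughout I would work with the rearrangement norm $\|\cdot\|_{L^{p,q}}$ (the one involving $f^{\ast}$), which is equivalent to $\|\cdot\|'_{L^{p,q}}$ in the Banach range.

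\emph{First step: rearrangement of a product.} I would first show that
$$(fg)^{\ast}(t_1+t_2)\le f^{\ast}(t_1)\,g^{\ast}(t_2),\qquad t_1,t_2>0,$$
and in particular $(fg)^{\ast}(2t)\le f^{\ast}(t)\,g^{\ast}(t)$ for every $t>0$. This rests on the inclusion of level sets $\{x:|f(x)g(x)|>\lambda_1\lambda_2\}\subseteq\{x:|f(x)|>\lambda_1\}\cup\{x:|g(x)|>\lambda_2\}$, valid for all $\lambda_1,\lambda_2>0$, which gives $d_{fg}(\lambda_1\lambda_2)\le d_f(\lambda_1)+d_g(\lambda_2)$. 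Taking $\lambda_1=f^{\ast}(t_1)$ and $\lambda_2=g^{\ast}(t_2)$ and using that $\lambda\mapsto d_f(\lambda)$ is right-continuous and non-increasing — so that $d_f(f^{\ast}(t_1))\le t_1$ and $d_g(g^{\ast}(t_2))\le t_2$ — yields $d_{fg}(f^{\ast}(t_1)g^{\ast}(t_2))\le t_1+t_2$, which is exactly the asserted bound by the definition of the rearrangement.

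\emph{Second step: reduction of the index and Hölder.} Since $\frac1q\le\frac1{q_1}+\frac1{q_2}=:\frac1{q_0}$ forces $q_0\le q$, and since the Lorentz (quasi-)norm is non-increasing in its second exponent for fixed $p$, so that $\|h\|_{L^{p,q}}\lesssim\|h\|_{L^{p,q_0}}$, it suffices to prove the estimate when $\frac1q=\frac1{q_1}+\frac1{q_2}$; then $\frac{q}{q_1}+\frac{q}{q_2}=1$ with $q/q_1,q/q_2\ge1$. Assume $q<\infty$ after this reduction (the sub-case $q=q_0=\infty$, i.e. $q_1=q_2=\infty$, follows from the same computation with integrals replaced by suprema). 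Changing variables $t\mapsto 2t$ in the integral defining $\|fg\|_{L^{p,q}}$, using the first step and $\frac1p=\frac1{p_1}+\frac1{p_2}$,
$$\|fg\|_{L^{p,q}}^q=\frac qp\int_0^\infty [t^{1/p}(fg)^{\ast}(t)]^q\,\frac{dt}{t}\le C\int_0^\infty [t^{1/p_1}f^{\ast}(t)]^q\,[t^{1/p_2}g^{\ast}(t)]^q\,\frac{dt}{t}.$$
Applying Hölder's inequality on $((0,\infty),\,dt/t)$ with conjugate exponents $q_1/q$ and $q_2/q$ bounds the right-hand side by
$$C\Big(\int_0^\infty [t^{1/p_1}f^{\ast}(t)]^{q_1}\frac{dt}{t}\Big)^{q/q_1}\Big(\int_0^\infty [t^{1/p_2}g^{\ast}(t)]^{q_2}\frac{dt}{t}\Big)^{q/q_2}\sim\big(\|f\|_{L^{p_1,q_1}}\|g\|_{L^{p_2,q_2}}\big)^q,$$
and taking $q$-th roots gives the claim; the hypotheses $1<p_1,p_2<\infty$ and $\frac1{p_1}+\frac1{p_2}<1$ are used only to keep $1<p<\infty$, i.e.\ to stay in the Banach range, and are not needed for the estimate itself.

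The step deserving the most care is the first one: one must be precise about strict versus non-strict inequalities in the distribution functions — equivalently, about right-continuity of $\lambda\mapsto d_f(\lambda)$ and the exact definition of $f^{\ast}$ — and about the degenerate cases where $f^{\ast}(t_1)$ or $g^{\ast}(t_2)$ vanishes. Apart from that, the argument is just the level-set inclusion, a scaling change of variables, the classical Hölder inequality, and the elementary embedding $L^{p,q_0}\hookrightarrow L^{p,q}$ for $q_0\le q$ (used so that the exponents in the final Hölder step are genuinely conjugate). If one prefers, the whole lemma may simply be quoted from O'Neil \cite{ON} or Bergh--Löfström \cite{BL}.
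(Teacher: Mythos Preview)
Your proof is correct and self-contained. The paper, by contrast, does not prove this lemma at all: it simply refers the reader to O'Neil \cite{ON}, Theorem~3.4. Your argument---the pointwise rearrangement bound $(fg)^{\ast}(2t)\le f^{\ast}(t)g^{\ast}(t)$ via level-set inclusion, the reduction $L^{p,q_0}\hookrightarrow L^{p,q}$ to make the second indices conjugate, and then scalar H\"older on $((0,\infty),dt/t)$---is a standard elementary derivation of O'Neil's inequality and is more informative than the bare citation. You even anticipate the paper's choice in your closing remark that one may simply quote O'Neil or Bergh--L\"ofstr\"om; that is exactly what the authors do.
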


\begin{proof}
See O'Neil \cite{ON}, Theorem 3.4.   
\end{proof}

\begin{lemma}\label{l2p}
	Let $0 < b < n$. Then $|x|^{-b} \in L^{\frac{n}{b}, \infty}(\mathbb{R}^n)$.
\end{lemma}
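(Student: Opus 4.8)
The statement to prove is Lemma \ref{l2p}: for $0 < b < n$, we have $|x|^{-b} \in L^{\frac{n}{b}, \infty}(\mathbb{R}^n)$.

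The plan is to compute the distribution function and the decreasing rearrangement of $f(x) = |x|^{-b}$ directly, then verify the $L^{\frac{n}{b},\infty}$ quasinorm is finite by the definition $\|f\|_{L^{p,\infty}} = \sup_{t>0} t^{1/p} f^*(t)$ with $p = n/b$.

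First I would compute the distribution function $d_f(\lambda) = |\{x \in \mathbb{R}^n : |x|^{-b} > \lambda\}|$. The inequality $|x|^{-b} > \lambda$ is equivalent to $|x| < \lambda^{-1/b}$, so the set is the open ball of radius $\lambda^{-1/b}$ centered at the origin, hence $d_f(\lambda) = \omega_n \lambda^{-n/b}$, where $\omega_n$ denotes the Lebesgue measure of the unit ball in $\mathbb{R}^n$. Next I would compute the decreasing rearrangement $f^*(t) = \inf\{\lambda > 0 : d_f(\lambda) \leq t\}$. From $d_f(\lambda) \leq t$ we get $\omega_n \lambda^{-n/b} \leq t$, i.e. $\lambda \geq (\omega_n/t)^{b/n}$, so the infimum is $f^*(t) = (\omega_n/t)^{b/n} = \omega_n^{b/n} t^{-b/n}$.

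Finally I would plug this into the quasinorm: with $p = n/b$,
\begin{align}
\|f\|_{L^{n/b,\infty}} = \sup_{t>0} t^{b/n} f^*(t) = \sup_{t>0} t^{b/n} \cdot \omega_n^{b/n} t^{-b/n} = \omega_n^{b/n} < \infty.
\end{align}
This shows $|x|^{-b} \in L^{\frac{n}{b},\infty}(\mathbb{R}^n)$, completing the proof.

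This lemma is essentially a routine computation and I do not expect a genuine obstacle; the only point requiring a little care is handling the weak-type (Marcinkiewicz) space correctly, namely remembering that for $q = \infty$ the relevant quantity is $\sup_{t>0} t^{1/p} f^*(t)$ rather than an integral, and noting that $|x|^{-b}$ is not in $L^{n/b}$ itself (the integral diverges both near $0$ and near $\infty$), which is precisely why the weak space is the natural home for this potential — a fact the authors exploit throughout via the generalized Hölder inequality in Lemma \ref{l1p}.
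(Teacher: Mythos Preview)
Your proof is correct and is precisely the direct computation from the definition that the paper alludes to (the paper's proof is just ``It follows directly from the definition''). You have simply made explicit the distribution function, rearrangement, and weak-$L^p$ quasinorm computation that the authors leave to the reader.
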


\begin{proof}
It follows directly from the definition. 
\end{proof}

\begin{lemma}\label{l22p} For every $a > 0$ it holds
	$$\left\| |f|^{a}\right\|_{L^{p, q}}=\| f \|_{L^{ap, aq}}^{a}.$$
	\end{lemma}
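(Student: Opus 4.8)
The plan is to reduce the whole statement to a single identity at the level of decreasing rearrangements, namely $(|f|^{a})^{\ast}(t)=\big(f^{\ast}(t)\big)^{a}$ for all $t>0$, and then to substitute this into the definition of the Lorentz (quasi)norm and rearrange exponents.

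First I would compute the distribution function of $g:=|f|^{a}$. Since $a>0$, the map $s\mapsto s^{a}$ is a strictly increasing bijection of $[0,\infty)$, so for every $\lambda>0$
$$
d_g(\lambda)=\big|\{x:|f(x)|^{a}>\lambda\}\big|=\big|\{x:|f(x)|>\lambda^{1/a}\}\big|=d_f(\lambda^{1/a}).
$$
Consequently, using the same monotonicity to pull the power outside the infimum and the substitution $\mu=\lambda^{1/a}$,
$$
g^{\ast}(t)=\inf\{\lambda>0:d_g(\lambda)\le t\}=\inf\{\lambda>0:d_f(\lambda^{1/a})\le t\}=\Big(\inf\{\mu>0:d_f(\mu)\le t\}\Big)^{a}=\big(f^{\ast}(t)\big)^{a}.
$$

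Next I would insert this identity into the definition of $\|\cdot\|_{L^{p,q}}$. For $q<\infty$, a direct change of exponents gives
$$
\big\||f|^{a}\big\|_{L^{p,q}}^{q}=\frac{q}{p}\int_0^{\infty}\big[t^{1/p}\,(f^{\ast}(t))^{a}\big]^{q}\,\frac{dt}{t}=\frac{q}{p}\int_0^{\infty}t^{q/p}\,(f^{\ast}(t))^{aq}\,\frac{dt}{t},
$$
while, because $\frac{aq}{ap}=\frac{q}{p}$,
$$
\|f\|_{L^{ap,aq}}^{aq}=\frac{q}{p}\int_0^{\infty}\big[t^{1/(ap)}f^{\ast}(t)\big]^{aq}\,\frac{dt}{t}=\frac{q}{p}\int_0^{\infty}t^{q/p}\,(f^{\ast}(t))^{aq}\,\frac{dt}{t}.
$$
Comparing the two displays yields $\big\||f|^{a}\big\|_{L^{p,q}}^{q}=\|f\|_{L^{ap,aq}}^{aq}$, i.e. $\big\||f|^{a}\big\|_{L^{p,q}}=\|f\|_{L^{ap,aq}}^{a}$. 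For $q=\infty$ (so that $aq=\infty$ as well), the same rearrangement identity gives $\big\||f|^{a}\big\|_{L^{p,\infty}}=\sup_{t>0}t^{1/p}(f^{\ast}(t))^{a}=\big(\sup_{t>0}t^{1/(ap)}f^{\ast}(t)\big)^{a}=\|f\|_{L^{ap,\infty}}^{a}$, once more using that $s\mapsto s^{a}$ is increasing to pass the power through the supremum.

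There is no serious obstacle: the only point requiring care is the justification of $(|f|^{a})^{\ast}=(f^{\ast})^{a}$, which rests entirely on the fact that raising to a positive power is an order-preserving homeomorphism of $[0,\infty)$ and hence commutes both with the monotone operations in the distribution function and with the infimum defining $f^{\ast}$; after that the argument is bookkeeping with exponents, the normalizing factor $q/p$ being unchanged precisely because $\frac{aq}{ap}=\frac{q}{p}$.
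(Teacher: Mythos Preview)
Your proof is correct and follows essentially the same approach as the paper: the paper's argument is precisely ``it follows from the definition and the fact that $(|f|^a)^\ast=(f^\ast)^a$'' (citing Grafakos for this rearrangement identity), and you have supplied exactly this, together with the straightforward verification that the exponents and the normalizing factor $q/p=aq/(ap)$ match.
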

	
	\begin{proof}
In fact, it follows from the definition and the fact that $(|f|^a)^*=(f^*)^a$ (see Remark 1.4.7 in Grafakos \cite{Graf}). 
\end{proof}

\begin{lemma}\label{Dil} If $\delta_a(f)(x)=f(ax)$ denotes a dilatation by $a > 0$, then
	$$\left\|\delta_{a}(f)\right\|_{L^{p, q}}=a^{-n/p}\| f \|_{L^{p, q}}.$$
	\end{lemma}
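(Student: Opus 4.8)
The plan is to reduce everything to how the dilatation $\delta_a$ transforms the distribution function $d_f$ and the decreasing rearrangement $f^*$, after which the result falls out of a single change of variables in the defining integral. First I would compute the distribution function of $g := \delta_a(f)$. By definition $d_g(\lambda) = |\{x : |f(ax)| > \lambda\}|$, and the change of variables $y = ax$ (whose Jacobian contributes a factor $a^{-n}$) gives
$$d_g(\lambda) = a^{-n}|\{y : |f(y)| > \lambda\}| = a^{-n} d_f(\lambda).$$
Thus dilating the argument by $a$ simply rescales the distribution function by the constant $a^{-n}$, uniformly in the level $\lambda$.

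Next I would translate this into an identity for the rearrangement. Since $g^*(t) = \inf\{\lambda > 0 : d_g(\lambda) \leq t\}$ and the condition $d_g(\lambda) \leq t$ is equivalent to $d_f(\lambda) \leq a^n t$, I obtain
$$g^*(t) = \inf\{\lambda > 0 : d_f(\lambda) \leq a^n t\} = f^*(a^n t).$$
So the effect of the spatial dilatation on the rearrangement is precisely a dilatation of the variable $t$ by the factor $a^n$.

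Finally I would substitute $g^*(t) = f^*(a^n t)$ into the definition of the Lorentz norm and perform the change of variables $s = a^n t$. In the case $q < \infty$ the key observation is that the measure $dt/t$ is invariant under this scaling, while $t^{1/p}$ produces the factor $(a^{-n})^{1/p} = a^{-n/p}$, so that
$$\|g\|_{L^{p,q}} = \left(\frac{q}{p}\int_0^\infty \left[t^{1/p} f^*(a^n t)\right]^q \frac{dt}{t}\right)^{1/q} = a^{-n/p}\left(\frac{q}{p}\int_0^\infty \left[s^{1/p} f^*(s)\right]^q \frac{ds}{s}\right)^{1/q} = a^{-n/p}\|f\|_{L^{p,q}}.$$
The case $q = \infty$ follows identically with the integral replaced by the supremum, namely $\sup_{t>0} t^{1/p} f^*(a^n t) = a^{-n/p}\sup_{s>0} s^{1/p} f^*(s)$.

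There is no genuine obstacle in this argument: it is a direct computation resting on the change-of-variables formula for the Lebesgue measure and the elementary behavior of $f^*$ under rescaling of $t$. The only point requiring a little care is keeping track of the exponent when changing variables — in particular recognizing that the logarithmic measure $dt/t$ is scale invariant, so the entire $a$-dependence is carried by the single factor $t^{1/p}$, which yields the claimed $a^{-n/p}$ and no spurious additional powers of $a$.
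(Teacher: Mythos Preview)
Your proof is correct and follows essentially the same approach as the paper: the paper's proof simply asserts the key identity $(\delta_a f)^*(t)=f^*(a^n t)$ and says ``it is enough to apply the definition of Lorentz spaces,'' which is exactly what you derive (via the distribution function) and then carry out explicitly with the change of variables $s=a^n t$. Your version is a fully written-out expansion of the paper's one-line argument.
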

	
	\begin{proof}
Since $(\delta_a(f))^*(t)=f^*(a^nt)$, it is enough to apply the definition of Lorentz spaces. 
\end{proof}

\begin{lemma}\label{l3p}
Let $1 \leq q \leq + \infty$ and $2 < p < \infty$. If
$$\frac{1}{p} + \frac{1}{p'} = 1$$
then there exists a constant $C > 0$ such that
$$\|e^{it\Delta} \varphi \|_{L^{p, q}(\mathbb{R}^n)} \leq C |t|^{-\frac{n}{2}\left(\frac{1}{p'} - \frac{1}{p}\right)}\| \varphi \|_{L^{p'\!, q}(\mathbb{R}^n)}$$
for all $t \neq 0$.
\end{lemma}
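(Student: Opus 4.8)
The plan is to obtain the estimate by real interpolation from the two classical endpoint bounds for the free Schrödinger group. Recall that the unitarity of $e^{it\Delta}$ on $L^2$ gives $\|e^{it\Delta}\varphi\|_{L^2} = \|\varphi\|_{L^2}$, while the explicit kernel representation $e^{it\Delta}\varphi(x) = (4\pi i t)^{-n/2}\int e^{i|x-y|^2/4t}\varphi(y)\,dy$ yields the dispersive bound $\|e^{it\Delta}\varphi\|_{L^\infty} \leq (4\pi |t|)^{-n/2}\|\varphi\|_{L^1}$. I would first record that, for fixed $t\neq 0$, the operator $T_t := e^{it\Delta}$ is therefore bounded $L^1 \to L^\infty$ with norm $\lesssim |t|^{-n/2}$ and bounded $L^2 \to L^2$ with norm $1$.

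Next I would invoke the real (Marcinkiewicz-type) interpolation theorem for Lorentz spaces: if a sublinear operator $T$ maps $L^{p_0,1}\to L^{q_0,\infty}$ with norm $M_0$ and $L^{p_1,1}\to L^{q_1,\infty}$ with norm $M_1$, then for $\theta\in(0,1)$ and $\frac1p = \frac{1-\theta}{p_0}+\frac{\theta}{p_1}$, $\frac1q = \frac{1-\theta}{q_0}+\frac{\theta}{q_1}$, and any $1\leq s\leq\infty$, $T$ maps $L^{p,s}\to L^{q,s}$ with norm $\lesssim M_0^{1-\theta}M_1^{\theta}$ (see Bergh--L\"ofstr\"om \cite{BL}). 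Applying this with $(p_0,q_0)=(1,\infty)$, $M_0 \sim |t|^{-n/2}$ and $(p_1,q_1)=(2,2)$, $M_1 = 1$, and choosing $\theta$ so that the target exponent is the given $p>2$, i.e. $\frac1p = \frac{1-\theta}{1}+\frac{\theta}{2}$, hence $\theta = 2/p' \in (0,1)$ since $p>2$; the corresponding domain exponent is $\frac{1}{p_\ast}=\frac{1-\theta}{1}+\frac{\theta}{2}$ evaluated on the first slot, which works out to $p_\ast = p'$. The interpolated norm is $\lesssim \big(|t|^{-n/2}\big)^{1-\theta} \cdot 1^{\theta} = |t|^{-\frac{n}{2}(1-\theta)} = |t|^{-\frac{n}{2}\cdot\frac{2}{p}} $; rewriting $1-\theta = 1 - \frac{2}{p'} = \frac1p - \frac1{p'} + \big(1 - \frac1p - \frac1{p'}\big) = \frac1{p'}-\frac1p$ using $\frac1p+\frac1{p'}=1$, one gets the claimed exponent $-\frac{n}{2}\big(\frac1{p'}-\frac1p\big)$. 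Choosing the interpolation parameter $s=q$ for the given $1\leq q\leq\infty$ then delivers $\|e^{it\Delta}\varphi\|_{L^{p,q}} \leq C|t|^{-\frac{n}{2}(\frac1{p'}-\frac1p)}\|\varphi\|_{L^{p',q}}$.

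The main obstacle is purely bookkeeping: one must check that the real interpolation functor $(\cdot,\cdot)_{\theta,s}$ indeed sends the pair $(L^1,L^2)$ to $L^{p',q}$ and $(L^\infty,L^2)$ to $L^{p,q}$ with the correct indices — this is the standard identification $(L^{p_0},L^{p_1})_{\theta,s} = L^{p,s}$ with $\frac1p=\frac{1-\theta}{p_0}+\frac{\theta}{p_1}$, valid for $p_0\neq p_1$, which covers our situation since $p>2$ forces the endpoints to differ. A secondary point to be careful about is that the dispersive estimate degrades as $|t|\to 0$, but since the lemma only claims the bound for fixed $t\neq 0$ with the $|t|$-dependent constant made explicit, no uniformity in $t$ is needed and the argument above suffices; tracking the exact power of $|t|$ through the interpolation (rather than absorbing it into $C$) is the one place where the computation in the previous paragraph must be done honestly.
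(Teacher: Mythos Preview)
Your approach via real interpolation between the $L^2\to L^2$ unitarity and the $L^1\to L^\infty$ dispersive bound is correct and is the standard argument; the paper does not give its own proof but simply cites Lemma~2.1 of \cite{BFV}, where precisely this interpolation is carried out.

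That said, the bookkeeping you flagged as ``the main obstacle'' is indeed off. With your labeling $(p_0,q_0)=(1,\infty)$ and $(p_1,q_1)=(2,2)$, the \emph{target} exponent must be computed from $(q_0,q_1)=(\infty,2)$, namely
\[
\frac{1}{p}=\frac{1-\theta}{\infty}+\frac{\theta}{2}=\frac{\theta}{2},
\]
so $\theta=2/p\in(0,1)$. The formula you wrote, $\tfrac1p=\tfrac{1-\theta}{1}+\tfrac{\theta}{2}$, uses the \emph{domain} exponents $(p_0,p_1)=(1,2)$ and yields $\theta=2/p'$, which is strictly greater than $1$ when $p>2$ (since then $p'<2$), contradicting your claim that $\theta\in(0,1)$. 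With the correct value $\theta=2/p$ one has immediately $1-\theta=1-\tfrac{2}{p}=\tfrac{1}{p'}-\tfrac{1}{p}$, and the domain exponent is $\tfrac{1}{p_\ast}=(1-\theta)\cdot 1+\tfrac{\theta}{2}=1-\tfrac{1}{p}=\tfrac{1}{p'}$, as desired. Your final exponent is right, but only because the algebraic slip ``$1-\tfrac{2}{p'}=\cdots=\tfrac{1}{p'}-\tfrac{1}{p}$'' (in fact $1-\tfrac{2}{p'}=\tfrac{1}{p}-\tfrac{1}{p'}$) flips a sign that compensates the earlier mislabeling; the intermediate step ``$|t|^{-\frac{n}{2}(1-\theta)}=|t|^{-\frac{n}{2}\cdot\frac{2}{p}}$'' is likewise inconsistent with $\theta=2/p'$. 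Once $\theta=2/p$ is used throughout, the argument is clean.
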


\begin{proof}
See Lemma 2.1 in \cite{BFV}.  
\end{proof}
The following result due to Cazenave and Weissler \cite{CW} will be crucial to prove the existence of self-similar solutions to the INLS equation.
\begin{proposition}[\cite{CW}]\label{propcaz} Let $P_k$ be a homogenous harmonic polynomial of degree $k$ and $\omega(x)=P_k(x)|x|^{-k}$. Consider $\varphi=\omega(x)|x|^{-p}$ where $0 < Re\,p < n$.
Then, $e^{it\Delta}\varphi \in L^r(\mathbb R^n)$ for all $t > 0$ and all $r$ such that
$$r> max\left\{\frac{n}{Re\,p},\frac{n}{n-Re\,p}\right\}.$$
\end{proposition}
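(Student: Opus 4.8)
The plan is to use the scaling symmetry of the Schrödinger group to reduce matters to the single time $t=1$, and then to obtain sharp pointwise bounds for $e^{i\Delta}\varphi$ near the origin and near infinity; the decay rate at infinity is exactly what produces the admissible range of $r$.

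Since $\omega(x)=P_k(x)|x|^{-k}$ is homogeneous of degree $0$, the datum $\varphi=\omega(x)|x|^{-p}$ is $p$-homogeneous, i.e. $D_{\lambda}\varphi=\varphi$ for all $\lambda>0$. Plugging this into the identity $e^{it\Delta}=D_{\lambda}e^{i\lambda^{2}t\Delta}D_{\lambda^{-1}}$ and choosing $\lambda=t^{-1/2}$ gives
\[
e^{it\Delta}\varphi(x)=t^{-p/2}\big(e^{i\Delta}\varphi\big)\big(t^{-1/2}x\big),
\]
so that, by Lemma \ref{Dil} with $L^{r}=L^{r,r}$,
\[
\|e^{it\Delta}\varphi\|_{L^{r}}=t^{\frac{n}{2r}-\frac{\Re p}{2}}\,\|e^{i\Delta}\varphi\|_{L^{r}} .
\]
Hence it suffices to prove that $g:=e^{i\Delta}\varphi\in L^{r}(\R^{n})$ precisely when $r>\max\{n/\Re p,\ n/(n-\Re p)\}$.

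For $|x|\le 1$, I would split $\varphi=\varphi\chi_{\{|y|\le 2\}}+\varphi\chi_{\{|y|>2\}}$: the first summand lies in $L^{1}(\R^{n})$ because $\Re p<n$, so its Schrödinger evolution is bounded (indeed continuous); for the second summand one integrates by parts repeatedly in the oscillatory integral $(4\pi i)^{-n/2}\int e^{i|x-y|^{2}/4}\varphi(y)\,dy$, exploiting that the phase is non-stationary on $\{|y|>2\}$ when $|x|\le 1$, to gain as much decay in $|y|$ as needed and to see that this piece is bounded on $\{|x|\le 1\}$ as well. Thus $g\in L^{\infty}(\{|x|\le1\})\subset L^{r}(\{|x|\le1\})$ for every $r$. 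To treat the region $|x|\ge1$ I would use that $e^{it\Delta}$ commutes with rotations and preserves the decomposition into spherical harmonics, so that by the Hecke--Funk formula $g(x)=P_{k}(x)|x|^{-k}\,h(|x|)$ with $h$ an explicit one-dimensional (Hankel-type) oscillatory integral; equivalently, Bochner's relation gives $\widehat{\varphi}=c_{n,k,p}\,P_{k}(\xi)|\xi|^{p-n-k}$ with $c_{n,k,p}$ finite and nonzero for $0<\Re p<n$, so $g=\mathcal F^{-1}\!\big(e^{-i|\xi|^{2}}P_{k}(\xi)|\xi|^{p-n-k}\big)$, which has a closed form in terms of confluent hypergeometric functions.

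From either representation, the large-$|x|$ behaviour of $g$ has two contributions: the stationary point $y=x$ of the phase (equivalently, one asymptotic branch of the Kummer function) yields the leading term, of size $|x|^{-\Re p}$ and proportional to $\varphi(x)$, while the singularity of $\varphi$ at the origin (equivalently, the other asymptotic branch) yields an oscillatory term of size $|x|^{-(n-\Re p)}$ carrying the factor $e^{i|x|^{2}/4}$. Consequently $|g(x)|\lesssim |x|^{-\Re p}+|x|^{-(n-\Re p)}$ for $|x|\ge1$, hence $g\in L^{r}(\{|x|\ge1\})$ if and only if $r\,\Re p>n$ and $r(n-\Re p)>n$; combining this with the boundedness near the origin gives $e^{i\Delta}\varphi\in L^{r}$ exactly in the asserted range. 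The crux --- and the only delicate point --- is this asymptotic analysis: controlling the error terms in the stationary-phase expansion against the singular amplitude $\varphi$, equivalently pinning down both asymptotic branches of the resulting Bessel/Kummer integral. This is precisely the explicit computation carried out by Cazenave and Weissler in \cite{CW}, from which the proposition is quoted.
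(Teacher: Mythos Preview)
The paper does not prove this proposition at all: it is quoted verbatim from Cazenave--Weissler \cite{CW} and used as a black box. Your sketch therefore cannot be compared to ``the paper's own proof,'' because there is none; but it is a faithful outline of the argument in \cite{CW} itself---reduction to $t=1$ by scaling, local boundedness of $e^{i\Delta}\varphi$, and the two-term asymptotic $|x|^{-\Re p}+|x|^{-(n-\Re p)}$ at infinity obtained from the explicit Bessel/Kummer representation. One small overclaim: you write that $g\in L^{r}(\{|x|\ge1\})$ \emph{if and only if} $r\Re p>n$ and $r(n-\Re p)>n$, but the proposition asserts only sufficiency, and your pointwise upper bound only yields sufficiency; necessity would require showing the two asymptotic terms do not cancel, which you have not argued (and which the proposition does not claim).
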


\section{Global well-posedness theory on a Lorentz space: Proof of Theorem \ref{globalWP}}\label{sec4}
We remind  
$$r = \dfrac{n(\alpha + 2)}{ n - b} \ \ {\rm and}\ \  \beta = \dfrac{4-2b-\alpha(n-2)}{2\alpha(\alpha+2)}.$$
We consider
\begin{equation}\label{class init}
\mathcal X=\left\{ u : \mathbb{R}^n \times \mathbb{R}\to\C \ | \ \sup_{t > 0} t^{\beta}\| u(t) \|_{L^{r, q}} < \infty\right\}
\end{equation}
endowed with the norm
$$| \! | \! | u | \! | \! | : = \sup_{t > 0} t^{\beta}\| u(t) \|_{L^{r, q}},$$
and 
\begin{equation}\label{W}
\mathcal W=\left\{ \varphi : \mathbb{R}^n\to\C \ | \ e^{it\Delta} \varphi\in\mathcal X\right\}.
\end{equation}
For $M > 0$ we define
$$\mathcal X_M := \left\{u : \mathbb{R}^n \times \mathbb{R} \ \longrightarrow \ \mathbb{C}\ | \ \sup_{t > 0} t^{\beta}\| u(t)  \|_{L^{r, q}} \leq M\right\}.$$

Fix $\varphi\in\mathcal W$ and consider the integral operator associated to the problem \eqref{pvi1}
$$\Psi(u) = e^{it\Delta} \varphi +i \gamma \int_0^t e^{i(t-\tau)\Delta}(|x|^{-b}|u|^{\alpha}u)(\tau) d \tau.$$
Following the well known contraction principle argument we shall prove that for a suitable $M$ we have
\begin{equation}\label{welldef}
	\Psi (X_M) \subseteq X_M
	\end{equation}
and
\begin{equation}\label{contraction}
	|\!|\!|\Psi (u) - \Psi (v)|\!|\!| \leq c |\!|\!|u - v|\!|\!|\ \ \forall \ u, v \in X_M, \ 
\end{equation}	
for some $0 < c < 1$.

Let then $u \in X_M$. We shall estimate the norm $|\!|\!|\Psi (u)|\!|\!|$. Using Lemma \ref{l3p} we have
\begin{align}\label{eq4gwp}
	\left\| \int_0^t e^{i(t-\tau)\Delta}(|x|^{-b}|u|^{\alpha}u)(\tau) d \tau\right\|_{L^{r,q}} &\leq 	\int_0^t \left\| e^{i(t-\tau)\Delta}(|x|^{-b}|u|^{\alpha}u)(\tau) \right\|_{L^{r,q}} d \tau \nonumber\\
	&\lesssim \int_0^t (t - \tau )^{-\frac{n}{2}\left(\frac{1}{r'} - \frac{1}{r}\right)} \left\| |x|^{-b}|u|^{\alpha}u \right\|_{L^{r'\!,q}} d \tau.
\end{align}
Now, since
$$\dfrac{1}{r'} = \dfrac{\alpha + 1}{r} + \dfrac{b}{n}\ \ {\rm and}\ \  \dfrac{1}{q} \leq \dfrac{\alpha + 1}{q} + \dfrac{1}{\infty},$$
it follows from Lemma \ref{l1p} that
\begin{align}\label{eq5gwp}
	\left\| |x|^{-b}|u|^{\alpha}u \right\|_{L^{r'\!,q}} &\lesssim \left\| |u|^{\alpha}u \right\|_{L^{\frac{r}{\alpha + 1}, \frac{q}{\alpha + 1}}} \| |x|^{-b}\|_{L^{\frac{n}{b}, \infty}}.
	\end{align}
Using Lemma \ref{l2p} and Lemma \ref{l22p} in \eqref{eq5gwp} we get 
\begin{align}\label{eq6gwp}
	\left\| |x|^{-b}|u|^{\alpha}u \right\|_{L^{r', q}} &\lesssim \left\| u \right\|_{L^{r,q}}^{\alpha + 1} \| |x|^{-b}\|_{L^{\frac{n}{b}, \infty}}\nonumber\\
	&\lesssim \left\| u \right\|_{L^{r,q}}^{\alpha + 1}.
\end{align}
Replacing estimate \eqref{eq6gwp} into \eqref{eq4gwp} we find
\begin{equation}\label{eq7gwp}
	\left\| \int_0^t e^{i(t-\tau)\Delta}(|x|^{-b}|u|^{\alpha}u)(\tau) d \tau\right\|_{L^{r,q}} 
	\lesssim \int_0^t (t - \tau )^{-\frac{n}{q}\left(\frac{1}{r'} - \frac{1}{r}\right)} \left\| u (\tau )\right\|_{L^{r,q}}^{\alpha + 1} d \tau.
\end{equation}
Also, we can bound the right hand side of \eqref{eq7gwp} as
\begin{align*}\label{eq9gwp}
\int_0^t (t - \tau )^{-\frac{n}{2}\left(\frac{1}{r'} - \frac{1}{r}\right)} \left\| u (\tau )\right\|_{L^{r,q}}^{\alpha + 1} d \tau&
\leq \sup_{\tau > 0} \left\{\tau^{\beta}\left\| u (\tau )\right\|_{L^{r,q}}\right\}^{\alpha + 1} \int_0^t \tau^{-(\alpha + 1) \beta}(t - \tau )^{-\frac{n}{2}\left(\frac{1}{r'} - \frac{1}{r}\right)} d \tau\\
& \lesssim M^{\alpha + 1}\int_0^t \tau^{-(\alpha + 1) \beta}(t - \tau )^{-\frac{n}{2}\left(\frac{1}{r'} - \frac{1}{r}\right)} d \tau
\end{align*}
and so
\begin{equation}\label{eq9gwp}
	\left\| \int_0^t e^{i(t-\tau)\Delta}(|x|^{-b}|u|^{\alpha}u)(\tau) d \tau\right\|_{L^{r,q}} 
	\lesssim M^{\alpha + 1}\int_0^t \tau^{-(\alpha + 1) \beta}(t - \tau )^{-\frac{n}{2}\left(\frac{1}{r'} - \frac{1}{r}\right)} d \tau .
\end{equation}
On the other hand
\begin{align}\label{eq10gwp}
\int_0^t \tau^{-(\alpha + 1) \beta}(t - \tau )^{-\frac{n}{2}\left(\frac{1}{r'} - \frac{1}{r}\right)} d \tau & = t^{- \beta ( \alpha + 1) - \frac{n}{2}\left(\frac{1}{r'} - \frac{1}{r}\right) + 1}\int_0^1 s^{-(\alpha + 1) \beta}(1 - s)^{-\frac{n}{2}\left(\frac{1}{r'} - \frac{1}{r}\right)} ds\nonumber\\
& =  t^{- \beta }\int_0^1 s^{-(\alpha + 1) \beta}(1 - s)^{-\frac{n}{2}\left(\frac{1}{r'} - \frac{1}{r}\right)} ds\nonumber\\
& = t^{-\beta}B\left(1 - \frac{n}{2}\left(\frac{1}{r'} - \frac{1}{r}\right), 1 - \beta(\alpha + 1)\right),
\end{align}
where $B(\cdot, \cdot)$ stands for the Beta function
$$B(\mu, \nu) = \int_0^1(1 - s)^{\mu - 1}s^{\nu - 1}ds, \ {\rm Re}(\mu), \ {\rm Re}(\nu) > 0.$$
Note that from \eqref{nalpha}, $1 - \frac{n}{2}\left(\frac{1}{r'} - \frac{1}{r}\right)$ and $ 1 - \beta(\alpha + 1)$ are positives.
Estimates \eqref{eq9gwp} and \eqref{eq10gwp} lead us to conclude
\begin{equation}\label{eq11gwp}
	\Big|\!\Big|\!\Big| \int_0^t S(t - \tau)(|x|^{-b}|u|^{\alpha}u)(\tau) d \tau\Big|\!\Big|\!\Big|
	\lesssim B\left(1 - \frac{n}{2}\left(\frac{1}{r'} - \frac{1}{r}\right), 1 - \beta(\alpha + 1)\right)\cdot M^{\alpha + 1}.
\end{equation}
Therefore
\begin{equation}\label{eq12gwp}
	|\!|\!| \Psi(u)|\!|\!| \leq \sup_{t > 0} t^{\beta}\| e^{it\Delta} \varphi \|_{L^{r, q}}  
	+ K M^{\alpha + 1},
\end{equation}
for some constant $K = K(\alpha, b) > 0$. Lets take $\rho$ sufficiently small and $M > 0$ so that
\begin{equation}\label{eq13gwp}
\rho + K M^{\alpha + 1} \leq M.
\end{equation}
Thus we get \eqref{welldef}. 

Next we shall prove the contraction \eqref{contraction}. The argument is similar to that we have just presented in the obtaining of \eqref{welldef}. In fact, consider $u, v \in X_M$. From Lemma \ref{l3p} and then Lemmas \ref{l1p} and \ref{l2p} it follows
\begin{align}\label{eq13.1gwp}
	\left\| \Psi (u) - \Psi (v)\right\|_{L^{r, q}} & \leq \left\| \int_0^t e^{i(t-\tau)\Delta}|x|^{-b}(|u|^{\alpha}u - |v|^{\alpha}v)(\tau) d \tau\right\|_{L^{r, q}} \nonumber\\
	& \lesssim \int_0^t (t - \tau)^{-\frac{n}{2}\left( \frac{1}{r'} - \frac{1}{r}\right)}\left\||x|^{-b}(|u|^{\alpha}u - |v|^{\alpha}v)(\tau)\right\|_{L^{r'\!, q}} d \tau\nonumber\\
	& \lesssim \int_0^t (t - \tau)^{-\frac{n}{2}\left( \frac{1}{r'} - \frac{1}{r}\right)}\left\||u|^{\alpha}u - |v|^{\alpha}v\right\|_{L^{\frac{r}{\alpha + 1}, \frac{q}{\alpha + 1}}} d \tau .
	\end{align}
Using that
$$||u|^{\alpha} u - |v|^{\alpha} v| \lesssim |u - v|^{\alpha + 1}$$
in \eqref{eq13.1gwp} and then Lemma \ref{l22p} we get
\begin{equation}\label{eq13.2gwp}
	\left\| \Psi (u) - \Psi (v)\right\|_{L^{r, q}}  
	 \lesssim \int_0^t (t - \tau)^{-\frac{n}{2}\left( \frac{1}{r'} - \frac{1}{r}\right)}\left\|u - v\right\|_{L^{r,q}}^{\alpha + 1} d \tau .
\end{equation}
Now, the same steps used to obtain \eqref{eq11gwp} lead us to the inequality
\begin{align}\label{eq13.3gwp}
	\left\| \Psi (u) - \Psi (v)\right\|_{L^{r, 2}}  
	&\lesssim t^{- \beta} B\left(1 - \frac{n}{2}\left(\frac{1}{r'} - \frac{1}{r}\right), 1 - \beta(\alpha + 1)\right)|\!|\!| u - v |\!|\!|^{\alpha + 1}\nonumber\\
	&\lesssim t^{- \beta}(2M)^{\alpha} B\left(1 - \frac{n}{2}\left(\frac{1}{r'} - \frac{1}{r}\right), 1 - \beta(\alpha + 1)\right)|\!|\!| u - v |\!|\!|.
\end{align}
That allows us to conclude
\begin{equation}\label{eq14gwp}
	|\!|\!| \Psi(u) - \Psi(v)|\!|\!| \leq (2M)^{\alpha} K|\!|\!| u - v |\!|\!|.
	\end{equation}
Choosing $M > 0$ so that
$$(2M)^{\alpha}K < 1,$$
we get the estimate \eqref{contraction}. With this we have the result.

\section{Self-similar solutions: Proof of Theorem \ref{selfsimilar}}\label{sec5}

As in \eqref{evohom}
\begin{align}
    t^{\beta}\|e^{it\Delta}\varphi\|_{L^{r,q}}=t^{\beta}t^{\frac{n-b}{2(\alpha+2)}-\frac{2-b}{2\alpha}}\|e^{i\Delta}\varphi\|_{L^{r,q}} = \|e^{i\Delta}\varphi\|_{L^{r,q}}.
\end{align}
Moreover, it follows from the conditions on $\alpha$ and $p$ we have $$0 < Re\,p < n$$ and $$r=\frac{n(\alpha+2)}{n-b}> max\left\{\frac{n}{Re\,p},\frac{n}{n-Re\,p}\right\}.$$ 
   Thus, from Proposition \ref{propcaz}, we get the finiteness of $\|e^{i\Delta}\varphi\|_{L^r}$. Then, since $L^r\hookrightarrow L^{r,q}$ continuously one has 
   $$\|e^{i\Delta}\varphi\|_{L^{r,q}}\leq \|e^{i\Delta}\varphi\|_{L^r}<\infty.$$
Then, for $\|e^{i\Delta}\varphi\|_{L^r}$ small enough, there exists a unique positively global solution $u$ of \eqref{pvi1} such that
\begin{align}\label{boundu}
\sup_{t>0}t^{\beta}\|u(t)\|_{L^{r,q}}\leq M.
\end{align}

The fact that the solution $u$ with initial value $\varphi$ is self-similar is a consequence of the uniqueness of the solution. Indeed, since $\lambda^p\varphi(\lambda x)=\varphi(x)$ for all $\lambda > 0$, the functions $\lambda^pu(\lambda x,\lambda^2t)$ are all solutions of \eqref{pvi1} with the same initial value $\varphi$ and all satisfying \eqref{boundu}.

\section{Scattering and Wave operator}\label{sec7}
\subsection{Proof of Theorem \ref{teo scattering} }
For $u_0\in\mathcal W$ we consider the corresponding global solution $u(t)$ of the IVP \eqref{pvi1} provided by Theorem \ref{globalWP}. We define 
\begin{equation}\label{eq S}
\psi_+(x):=u_0+i\gamma\dis\int_0^{+\infty}e^{-is\Delta}|x|^{-b}|u(x,s)|^\alpha u(x,s)ds.
\end{equation}

Applying the Schrödinger flow to both sides of \eqref{eq S} we find
\begin{align}
e^{it\Delta}\psi_+(x)&=e^{it\Delta}u_0+i\gamma\dis\int_0^{+\infty}e^{i(t-s)\Delta}|x|^{-b}|u(x,s)|^\alpha u(x,s)ds\\
&=u(t)+i\gamma\dis\int_t^{+\infty}e^{i(t-s)\Delta}|x|^{-b}|u(x,s)|^\alpha u(x,s)ds.
\end{align}

Using the same steps presented from \eqref{eq4gwp} to \eqref{eq9gwp} we get
\begin{align}
\|e^{it\Delta}\psi_+-u(t)\|_{L^{r,q}}&=\|\gamma\dis\int_t^{+\infty}e^{i(t-s)\Delta}|x|^{-b}|u(x,s)|^\alpha u(x,s)ds\|_{L^{r,q}}\\
&\lesssim\dis\int_t^{+\infty}|t-s|^{-\frac{n}{2}(\frac{1}{r'}-\frac{1}{r})} \||x|^{-b}|u(x,s)|^\alpha u(x,s)\|_{L^{r',q}}ds\\
&\lesssim|\!|\!|u|\!|\!|^{\alpha+1}\dis\int_t^{+\infty}|t-s|^{-\frac{n}{2}(\frac{1}{r'}-\frac{1}{r})}s^{-\beta(\alpha+1)}ds.
\end{align}

We note that, by using the change of variable $s=tr$ and the definition of $\beta$ (see \eqref{beta}) we can write 
\begin{align}
\dis\int_t^{+\infty}|t-s|^{-\frac{n}{2}(\frac{1}{r'}-\frac{1}{r})}s^{-\beta(\alpha+1)}ds&=t^{1-\frac{n}{2}(\frac{1}{r'}-\frac{1}{r})-\beta(\alpha+1)}\dis\int_1^{+\infty}(1-r)^{-\frac{n}{2}(\frac{1}{r'}-\frac{1}{r})}r^{-\beta(\alpha+1)}dr\nonumber\\
&=t^{-\beta}\dis\int_0^{1}(1-s)^{-\frac{n}{2}(\frac{1}{r'}-\frac{1}{r})}s^{\beta(\alpha+1)-2+\frac{n}{2}(\frac{1}{r'}-\frac{1}{r})}ds.
\end{align}
And now from the change of variable $r=1/s$ and the definition of  the beta function, we find 
\begin{align}
t^{-\beta}\dis\int_0^{1}(1-s)^{-\frac{n}{2}(\frac{1}{r'}-\frac{1}{r})}s^{\beta(\alpha+1)-2+\frac{n}{2}(\frac{1}{r'}-\frac{1}{r})}ds\lesssim t^{-\beta}B\left(1-\frac{n}{2}(\frac{1}{r'}-\frac{1}{r}),\beta(\alpha+1)+\frac{n}{2}(\frac{1}{r'}-\frac{1}{r})-1\right)
\end{align}
and then
\begin{equation}\label{eq6 scatt}
\dis\int_t^{+\infty}|t-s|^{-\frac{n}{2}(\frac{1}{r'}-\frac{1}{r})}s^{-\beta(\alpha+1)}ds \lesssim t^{-\beta}B\left(1-\frac{n}{2}(\frac{1}{r'}-\frac{1}{r}),\beta(\alpha+1)+\frac{n}{2}(\frac{1}{r'}-\frac{1}{r})-1\right).
\end{equation}
Therefore,
\begin{align}\label{estfinal}
\|e^{it\Delta}\psi_+-u(t)\|_{L^{r,q}}\lesssim t^{-\beta}B\left(1-\frac{n}{2}(\frac{1}{r'}-\frac{1}{r}),\beta(\alpha+1)+\frac{n}{2}(\frac{1}{r'}-\frac{1}{r})-1\right)|\!|\!|u|\!|\!|^{\alpha+1}.
\end{align}
From this we conclude $\psi_+\in\mathcal W$. Next we apply $e^{-it \Delta}$ to both sides of the integral equation
$$u(t) = e^{it \Delta}u_0 - i \gamma \int_0^te^{i(t - \tau) \Delta}|x|^{-b}|u|^{\alpha} u d\tau$$
to get
\begin{align}
e^{-it \Delta}u(t) &= u_0 - i \gamma \int_0^te^{-i \tau \Delta}|x|^{-b}|u|^{\alpha} u d\tau\\
&= \psi_+(x) + i \gamma \int_t^{+\infty}e^{- i \tau \Delta}|x|^{-b}|u|^{\alpha} u d\tau
\end{align}
and so
\begin{equation}\label{eq3 scatt}
    e^{-it \Delta}u(t) - \psi_+(x) = i \gamma \int_t^{+\infty}e^{- i \tau \Delta}|x|^{-b}|u|^{\alpha} u d\tau.
\end{equation}
Repeating the same estimates presented from \eqref{eq4gwp} to \eqref{eq7gwp} it follows
\begin{align}\label{eq4 scatt}
    \|e^{-it \Delta}u(t) - \psi_+\|_{L^{r, q}} &\leq c |\gamma| |\!|\!| u |\!|\!|^{\alpha + 1}\int_t^{+\infty} \tau^{-\frac{n}{2}(\frac{1}{r'}-\frac{1}{r} - \beta(\alpha + 1)} d\tau\nonumber\\
    & \leq c |\gamma| |\!|\!| u |\!|\!|^{\alpha + 1}\dfrac{t^{-\beta}}{\beta} .
\end{align}
From estimate \eqref{eq4 scatt} we conclude the proof of Theorem \ref{teo scattering}.
\subsection{Proof of Theorem \ref{teowo}}
For a fixed $\psi \in \mathcal{W}$ we consider the integral operator
\begin{equation}\label{int op wave}
Q_+(u)(t) := e^{it \Delta}\psi - i \gamma \int_t^{+\infty}e^{i(t - \tau) \Delta}|x|^{-b}|u|^{\alpha} u d\tau.
\end{equation}
For $u \in \mathcal{X}$ we argue as in \eqref{eq4gwp} to find
\begin{align}\label{eq1waveop}
\|Q_+(u)(t)\|_{L^{r, q}} &\leq \|e^{it \Delta}\psi\|_{L^{r, q}} +c |\gamma|\int_t^{+ \infty} (t - \tau )^{-\frac{n}{2}\left(\frac{1}{r'} - \frac{1}{r}\right)} \left\| |x|^{-b}|u|^{\alpha}u \right\|_{L^{r'\!,q}} d \tau\nonumber\\
&\leq \|e^{it \Delta}\psi\|_{L^{r, q}} + c |\gamma||\!|\!| u |\!|\!|^{\alpha + 1} \int_t^{+ \infty} (t - \tau )^{-\frac{n}{2}\left(\frac{1}{r'} - \frac{1}{r}\right)}\tau^{-\beta(\alpha + 1)} d \tau.
\end{align}
Using \eqref{eq6 scatt} we conclude
\begin{equation}\label{eq3waveop}
\|Q_+(u)(t)\|_{L^{r, q}} \leq  \|e^{it \Delta}\psi\|_{L^{r, q}}+ ct^{-\beta}|\!|\!|u|\!|\!|^{\alpha+1}B\left(1-\frac{n}{2}(\frac{1}{r'}-\frac{1}{r}),\beta(\alpha+1)+\frac{n}{2}(\frac{1}{r'}-\frac{1}{r})-1\right).
\end{equation}
Arguing as in \eqref{eq13.1gwp}, \eqref{eq13.2gwp} and \eqref{eq13.3gwp} we also have
\begin{align}\label{eq4waveop}
\|Q_+(u)(t) - Q_+(v)(t)\|_{L^{r, q}} & \lesssim |\gamma|\int_t^{+ \infty} (t - \tau )^{-\frac{n}{2}\left(\frac{1}{r'} - \frac{1}{r}\right)} \left\| |x|^{-b}(|u|^{\alpha}u - |v|^{\alpha}u )\right\|_{L^{r'\!,q}} d \tau\nonumber\nonumber\\
&\hspace{-1.4cm}\lesssim  |\gamma||\!|\!| u - v |\!|\!|^{\alpha + 1} \int_t^{+ \infty} (t - \tau )^{-\frac{n}{2}\left(\frac{1}{r'} - \frac{1}{r}\right)}\tau^{-\beta(\alpha + 1)} d \tau\nonumber\\
&\hspace{-1.4cm}\lesssim |\gamma|t^{-\beta}|\!|\!|u - v|\!|\!|^{\alpha+1}B\left(1-\frac{n}{2}(\frac{1}{r'}-\frac{1}{r}),\beta(\alpha+1)+\frac{n}{2}(\frac{1}{r'}-\frac{1}{r})-1\right).
\end{align}
From estimates \eqref{eq3waveop} and \eqref{eq4waveop} we conclude that $Q_+$ is a contraction map in some ball of $\mathcal{X}$ provided $\Psi$ is sufficiently small. So there exists $u\in \mathcal{X}$ such that 
\begin{equation}\label{solintwave}
u(t) = e^{it \Delta}\psi - i \gamma \int_t^{+\infty}e^{i(t - \tau) \Delta}|x|^{-b}|u|^{\alpha} u d\tau.
\end{equation}
Applying the Schr\"odinger operator $e^{-it\Delta}$ to both sides of \eqref{solintwave} we get
\begin{equation}\label{eq5waveop}
e^{-it \Delta}u(t)- \psi = - i \gamma \int_t^{+\infty}e^{-\tau \Delta}|x|^{-b}|u|^{\alpha} u d\tau.
\end{equation}
So, as in \eqref{eq4 scatt} we conclude
$$ \|e^{-it \Delta}u(t) - \psi\|_{L^{r, q}} \leq c |\gamma| |\!|\!| u |\!|\!|^{\alpha + 1}\dfrac{t^{-\beta}}{\beta} .$$
That finishes the proof of Theorem \ref{teowo}.

\section{Asymptotic stability: Proof of Theorem \ref{asymptsta}}\label{sec6}

First of all
\begin{equation}\label{eq1asymtheo}
u(t) - v(t) = e^{it\Delta}(\phi - \varphi) - \gamma \int_0^t e^{i(t-\tau)\Delta}|x|^{-b}(|u|^{\alpha}u - |v|^{\alpha}v)(\tau) d \tau.
\end{equation}
Then, 
\begin{align}\label{eq1asymtheo}
t^{\beta + h}\|u(t) - v(t)\|_{L^{r,q}} &\leq t^{\beta + h}\| e^{it\Delta} (\phi - \varphi)\|_{L^{r,q}}\nonumber\\ 
&+   t^{\beta + h} |\gamma|\int_0^t \| e^{i(t-\tau)\Delta}|x|^{-b}(|u|^{\alpha}u - |v|^{\alpha}v)(\tau)\|_{L^{r,q}} d \tau.
\end{align}
Following the steps presented in \eqref{eq13.1gwp} and \eqref{eq13.2gwp} one gets
\begin{align}\label{eq2asymtheo}
	t^{\beta + h}\left\| u(t) - v(t)\right\|_{L^{r, q}}  
	 & \leq t^{\beta + h}\left\| e^{it\Delta}( \phi - \varphi)\right\|_{L^{r, q}}\nonumber\\
  & + t^{\beta + h}|\gamma|\int_0^t (t - \tau)^{-\frac{n}{2}\left( \frac{1}{r'} - \frac{1}{r}\right)}\left\|u(\tau) - v(\tau)\right\|_{L^{r,q}}^{\alpha + 1} d \tau .
\end{align}
It turns out
\begin{align}\label{eq3asymptheo}
&t^{\beta + h}\int_0^t  (t - \tau)^{-\frac{n}{2}\left( \frac{1}{r'} - \frac{1}{r}\right)}\left\|u(\tau) - v(\tau)\right\|_{L^{r,q}}^{\alpha + 1} d \tau \nonumber\\
&= t^{\beta + h -\frac{n}{2}\left( \frac{1}{r'} - \frac{1}{r}\right) + 1}\int_0^1 (1 - s)^{-\frac{n}{2}\left( \frac{1}{r'} - \frac{1}{r}\right)}\left\|u(st) - v(st)\right\|_{L^{r,q}}^{\alpha + 1} ds \nonumber\\
&\lesssim \int_0^1 (1 - s)^{-\frac{n}{2}\left( \frac{1}{r'} - \frac{1}{r}\right)}s^{-\beta(\alpha + 1) - h}\left[(st)^{ \beta}\left\|u(st)\right\|_{L^{r,q}} + (st)^{ \beta}\left\|v(st)\right\|_{L^{r,q}}\right]^{\alpha} (st)^{\beta + h}\|u(st) - v(st)\|_{L^{r,q}}ds \nonumber\\
&\lesssim \big{(}|\!|\!| u |\!|\!| + |\!|\!| v |\!|\!|\big{)}^{\alpha}\int_0^1 (1 - s)^{-\frac{n}{2}\left( \frac{1}{r'} - \frac{1}{r}\right)}s^{-\beta(\alpha + 1) - h} (st)^{\beta + h}\|u(st) - v(st)\|_{L^{r,q}}ds.
\end{align}
Now, we claim that 
\begin{equation}\label{eq4asymptheo}
A:= \limsup_{t \rightarrow + \infty} t^{\beta + h} \|u(t) - v(t)\|_{L^{r,q}} < + \infty.
\end{equation}
Using estimate \eqref{eq3asymptheo} in \eqref{eq2asymtheo} and assuming the claim \eqref{eq4asymptheo} we arrive at the estimate
\begin{equation}\label{eq5asymptheo}
A \leq  \limsup_{t \rightarrow + \infty} \|e^{it\Delta}( \phi - \varphi)\|_{L^{r,q}} + c|\gamma|\left(|\!|\!| u |\!|\!| + |\!|\!| v |\!|\!|\right)^{\alpha}\left(\int_0^1 (1 - s)^{-\frac{n}{2}\left( \frac{1}{r'} - \frac{1}{r}\right)}s^{-\beta(\alpha + 1) - h}ds\right)A.
\end{equation}
From \eqref{Lasym}, the definition of  the beta function and \eqref{eq5asymptheo} we get
\begin{align}\label{eq6asymptheo}
A &\leq  c|\gamma|\left(|\!|\!| u |\!|\!| + |\!|\!| v |\!|\!|\right)^{\alpha} B\left(1 -\frac{n}{2}\left( \frac{1}{r'} - \frac{1}{r}\right), 1-\beta(\alpha + 1) - h\right)A\nonumber\\
&\leq c|\gamma|(2M)^{\alpha}B\left(1 -\frac{n}{2}\left( \frac{1}{r'} - \frac{1}{r}\right), 1-\beta(\alpha + 1) - h\right)A.
\end{align}
From the hypothesis \eqref{condasymp} we can assume
$$c|\gamma|(2M)^{\alpha}B\left(1 -\frac{n}{2}\left( \frac{1}{r'} - \frac{1}{r}\right), 1-\beta(\alpha + 1) - h\right) < 1$$
and then we conclude that $A = 0$. Therefore
$$\lim_{t \to + \infty} t^{\beta + h}\| u(t) - v(t)\|_{L^{r, 2}} = 0.$$


\begin{thebibliography}{32}
\providecommand{\natexlab}[1]{#1}
\providecommand{\url}[1]{\texttt{#1}}
\expandafter\ifx\csname urlstyle\endcsname\relax
  \providecommand{\doi}[1]{doi: #1}\else
  \providecommand{\doi}{doi: \begingroup \urlstyle{rm}\Url}\fi


\bibitem{AT21}
L.~Aloui and S.~Tayachi.
\newblock Local well-posedness for the inhomogeneous nonlinear {S}chr\"{o}dinger equation.
\newblock \emph{Discrete and Continuous Dynamic Systems}, 41(11): 5409-5437, 2021.

\bibitem{AC20}
A.~H. Ardila and M.~Cardoso.
\newblock Blow-up solutions and strong instability of ground states for the
  inhomogeneous nonlinear {S}chr\"{o}dinger equation.
\newblock \emph{Communications on Pure Applied Analysis}, 20\penalty0 (1):\penalty0 101--119,
  2021.


\bibitem{BL} 
J. Berg and J. L\"ofstr\"om. 
\newblock \textit{Interpolation Spaces.}
\newblock Springer-Verlag, New York, 1976.

\bibitem{BFV} 
P. Braz e Silva, L. Ferreira and E. Villamizar-Roa.
\newblock On the existence of infinite energy solutions for nonlinear Schrödinger equations. 
\newblock \textit{Proceedings of the American Mathematical Society.}  137 (6): 1977--1987, 2009.

\bibitem{Bourgain}
J. Bourgain.
\newblock \textit{Global solutions of nonlinear Schrödinger equations.} 
\newblock American Mathematical Society Colloquium Publications, vol. 46, American Mathematical Society, Providence, RI, 1999.

\bibitem{BFV09}
P. Braz e Silva, Lucas Ferreira, and Elder Villamizar-Roa.
\newblock On the existence of infinite energy solutions for nonlinear Schrödinger equations.
\newblock \emph{Proceedings of the American Mathematical Society}, 137(6):1977--1987, 2009.

\bibitem{C21}
L.~Campos.
\newblock Scattering of radial solutions to the inhomogeneous nonlinear
  {S}chr\"{o}dinger equation.
\newblock \emph{Nonlinear Analysis}, 202:\penalty0 112118, 17, 2021.

\bibitem{CCF22}
L. Campos, S. Correia, and L. G. Farah,
\newblock Some well-posedness and ill-posedness results for the INLS equation,
\newblock \emph{arXiv preprint arXiv:2210.07060}, 2022.


\bibitem{CF21}
M. Cardoso and L. G. Farah. 
\newblock Blow-up of radial solutions for the intercritical inhomogeneous NLS equation.
\newblock \emph{Journal of Functional Analysis}, 281(8): 109134, 2021.

	\bibitem[Cardoso, Farah, and
	Guzm{\'a}n]{CFG20}
	M.~Cardoso, L.~G. Farah, and C.~M. Guzm{\'a}n.
	\newblock On well-posedness and concentration of blow-up solutions for the intercritical inhomogeneous {NLS} equation.
	\newblock \emph{Journal of Dynamics and Differential Equations}, 35: 1337-1367, 2021.

\bibitem{CAZENAVEBOOK}
T. Cazenave, \emph{Semilinear Schrödinger equations}, Courant Lecture Notes in Mathematics, vol. 10, New York University, Courant Institute of Mathematical Sciences, New York; American Mathematical Society, Providence, RI, 2003.

\bibitem{CW} 
T. Cazenave and F. Weissler. 
\newblock Asymptotically self-similar global solutions of the nonlinear Schrödinger and heat equations. 
\newblock \textit{Mathematische Zeitschrift}. 228, 83--120, 1998.

\bibitem{CW00}
T. Cazenave and F. Weissler.
\newblock Scattering theory and self-similar solutions for the nonlinear Schrödinger equation.
\newblock \emph{SIAM Journal on Mathematical Analysis}, 31(3):625--650, 2000.


\bibitem{Cho-Lee}
Y. Cho and M. Lee.
\newblock On the orbital stability of inhomogeneous nonlinear {S}chrödinger equations with singular potential.
\newblock \emph{Bulletin of the Korean Mathematical Society}, 56 (5):1369--1383, 2019.


\bibitem{CG16}
V.~Combet and F.~Genoud.
\newblock Classification of minimal mass blow-up solutions for an {$L^2$}
  critical inhomogeneous {NLS}.
\newblock \emph{Journal of Evolution Equations}. 16\penalty0 (2):\penalty0 483--500, 2016.


\bibitem{DF05}
A.~De~Bouard and R.~Fukuizumi.
\newblock Stability of standing waves for nonlinear {S}chr\"{o}dinger equations
  with inhomogeneous nonlinearities.
\newblock \emph{Annales Henri Poincaré}. 6\penalty0 (6):\penalty0 1157--1177,
  2005.

\bibitem{dinh2017blowup}
V.~D. Dinh.
\newblock Blowup of {$H^1$} solutions for a class of the focusing inhomogeneous
  nonlinear {S}chr\"{o}dinger equation.
\newblock \emph{Nonlinear Analysis}, 174:\penalty0 169--188, 2018.

\bibitem{Dinh2017}
V. D. Dinh.
\newblock Scattering theory in a weighted $L^2$ space for a class of the defocusing inhomogeneous nonlinear Schrödinger equation.
\newblock \textit{Advances in Pure and Applied Mathematics}. 12(3): 38-72, 2021.


\bibitem{Farah}
L.~G. Farah.
\newblock Global well-posedness and blow-up on the energy space for the
  inhomogeneous nonlinear {S}chr\"{o}dinger equation.
\newblock \emph{Journal of Evolution Equation}, 16\penalty0 (1):\penalty0 193--208, 2016.


\bibitem{FG17}
L.~G. Farah and C.~M. Guzm\'{a}n.
\newblock Scattering for the radial 3{D} cubic focusing inhomogeneous nonlinear
  {S}chr\"{o}dinger equation.
\newblock \emph{Journal of  Differential Equations}. 262\penalty0 (8):\penalty0
  4175--4231, 2017.

\bibitem{GENOUD}
F. Genoud.
\newblock An inhomogeneous, {$L\sp 2$}-critical, nonlinear {S}chrödinger equation.
\newblock \emph{Zeitschrift für Analysis und ihre Anwendungen}, 31(3): 283--290, 2012.


	\bibitem{GENSTU}
	F.~Genoud and C.~A. Stuart.
	\newblock Schr\"{o}dinger equations with a spatially decaying nonlinearity:
	existence and stability of standing waves.
	\newblock \emph{Discrete and Continuous Dynamics Systems}, 21\penalty0 (1):\penalty0
	137--186, 2008.

\bibitem{Graf}
L. Grafakos, \emph{Classical Fourier Analysis}, 2nd ed., Grad. Texts in Math. {\bf 249}, Springer, New York, 2008.

\bibitem{CARLOS}
C. M. Guzm\'{a}n.
\newblock On well posedness for the inhomogeneous nonlinear {S}chr\"{o}dinger equation,
\newblock \emph{Nonlinear Analysis: Real World Applications}, 37: 249--286, 2017.

\bibitem{FELGUS}
F. Linares and G. Ponce.
\newblock \emph{Introduction to nonlinear dispersive equations}, 2nd ed., Universitext, Springer, New York, 2015.

\bibitem{kw1}
O. Kavian and F. B. Weissler,
\newblock The pseudo-conformally invariant nonlinear Schr{"o}dinger equation.
\newblock \emph{Michigan Mathematical Journal }, 41:151--173, 1994.

\bibitem{kw2}
O. Kavian and F. Weissler,
\newblock Remarks on the large time behaviour of a nonlinear diffusion equation,
\newblock \emph{Annales de l'Institut Henri Poincaré, section C}. 4 (5):423--452, 1987.

\bibitem{M21}
J.~Murphy.
\newblock A simple proof of scattering for the intercritical inhomogeneous
  {NLS}.
\newblock \emph{Proceedings of the American Mathematical Society}. 150: 1177-1186, 2022.

\bibitem{Zhang03}
C. Miao, B. Zhang, and X. Zhang.
\newblock Self-similar solutions for nonlinear Schr{"o}dinger equations.
\newblock \emph{Methods and applications of analysis}, 10(1): 119--136, 2003.

\bibitem{ON} 
R. O'Neil, 
\newblock Convolution operators and $L(p; q)$ spaces.
\newblock \textit{Duke Mathematical  Journal.} 30 (1): 129--142, 1963.

\bibitem{P00cauchy}
F. Planchon.
\newblock On the Cauchy problem in Besov spaces for a non-linear Schr\"odinger equation.
\newblock \emph{Communications in Contemporary Mathematics}, 2(02):243--254, 2000.

\bibitem{ribaud1998regular}
F. Ribaud and A. Youssfi,
\newblock Regular and self-similar solutions of nonlinear Schr{"o}dinger equations.
\newblock \emph{Journal de math{'e}matiques pures et appliqu{'e}es}, 77(10):1065--1079, 1998.

\bibitem{Tao}
T. Tao, \emph{Nonlinear dispersive equations}, CBMS Regional Conference Series in Mathematics, vol. 106, Published for the Conference Board of the Mathematical Sciences, Washington, DC; by the American Mathematical Society, Providence, RI, 2006. (Local and global analysis)
\end{thebibliography}
\end{document}